\documentclass[12pt]{article}
\usepackage{textcomp}
\usepackage{amssymb}
\usepackage{latexsym}
\usepackage{amsthm}
\usepackage{amscd}
\usepackage{amsmath}
\usepackage{mathrsfs}
\usepackage{amsfonts}
\usepackage{enumitem}

\usepackage{lineno}
\usepackage{a4wide}
\usepackage{amsmath}
\usepackage{amssymb}
\usepackage{amsthm}
\usepackage{latexsym}
\usepackage{graphicx}
\usepackage[english]{babel}
\usepackage{makeidx}

%thesis margins
\setlength{\paperheight}{11.5in}
\setlength{\headsep}{0in}
\setlength{\topmargin}{-0.6in}
\setlength{\headheight}{0in}
\setlength{\voffset}{1in} 
\setlength{\oddsidemargin}{-0.1in}
\setlength{\evensidemargin}{0.6in} 
\setlength{\textheight}{8.7in}
\setlength{\textwidth}{6.4in}
\setlength{\footskip}{0.6in}
%\linespread{1.6}

\newtheorem{thm}{Theorem}[section]
\newtheorem{lem}[thm]{Lemma}
\newtheorem{cor}[thm]{Corollary}
\newtheorem{defn-lem}[thm]{Definition-Lemma}
\newtheorem{conj}[thm]{Conjecture}

\newtheorem{prop}[thm]{Proposition}

\theoremstyle{remark}

\theoremstyle{definition}

\numberwithin{equation}{section}

\allowdisplaybreaks[4]

\def \N{{\mathbb N}}

\def \Z{{\mathbb Z}}
\def \R{{\mathbb R}}

\def\map#1.#2.{#1 \longrightarrow #2}
\def\rmap#1.#2.{#1 \dasharrow #2}

\DeclareMathOperator{\depth}{depth}
\DeclareMathOperator{\sdepth}{sdepth}
\def\fb#1.{\underset #1 \to \times}
\def\pr#1.{\Bbb P^{#1}}
\def\ring#1.{\mathcal O_{#1}}
\def\mlist#1.#2.{{#1}_1,{#1}_2,\dots,{#1}_{#2}}

\def\uloopr#1{\ar@'{@+{[0,0]+(-4,5)} @+{[0,0]+(0,10)}
@+{[0,0]+(4,5)}}^{#1}}

\def\dloopr#1{\ar@'{@+{[0,0]+(-4,-5)} @+{[0,0]+(0,-10)}
@+{[0,0]+(4,-5)}}_{#1}}

\def\rloopd#1{\ar@'{@+{[0,0]+(5,4)} @+{[0,0]+(10,0)}
@+{[0,0]+(5,-4)}}^{#1}}

\newcommand{\rdots}{\mathinner{\mkern1mu\raise1pt\hbox{.}\mkern2mu\raise4pt\hbox{.} \mkern2mu\raise7pt\vbox{\kern7pt\hbox{.}}\mkern1mu}} 

\def\lloopd#1{\ar@'{@+{[0,0]+(-5,4)} @+{[0,0]+(-10,0)}
@+{[0,0]+(-5,-4)}}_{#1}}

\long\def\ignore#1{}
\long\def\ignore#1{#1}

\begin{document}

\begin{center}
{\bf On a conjecture of Stanley depth of squarefree Veronese ideals}
\end{center}

\medskip

\begin{center}  
{{\small Maorong Ge, Jiayuan Lin and Yi-Huang Shen}}
\end{center}

{\small {\bf Abstract} In this paper, we partially confirm a conjecture, proposed by Cimpoea\c{s}, Keller, Shen, Streib and Young, on the Stanley depth of squarefree Veronese ideals $I_{n,d}$. This conjecture suggests that, for positive integers $1 \le d \le n$, $\sdepth (I_{n,d})=\left \lfloor \binom{n}{d+1}/\binom{n}{d} \right\rfloor+d$. Herzog, Vladoiu and Zheng established a connection between the Stanley depths of quotients of monomial ideals and interval partitions of certain associated posets. Based on this connection, Keller, Shen, Streib and Young recently developed a useful combinatorial tool to analyze the interval partitions of the posets associated with the squarefree Veronese ideals. We modify their ideas and prove that if $1 \le d \le n \le (d+1) \left \lfloor \frac{1+\sqrt{5+4d}}{2}\right\rfloor+2d$, then $\sdepth (I_{n,d})=\left \lfloor \binom{n}{d+1}/\binom{n}{d} \right\rfloor+d$. We also obtain $\left \lfloor \frac{d+\sqrt{d^2+4(n+1)}}{2} \right\rfloor \le \sdepth(I_{n,d})  \le \left \lfloor \binom{n}{d+1}/\binom{n}{d} \right\rfloor+d$ for $n > (d+1) \left \lfloor \frac{1+\sqrt{5+4d}}{2}\right\rfloor+2d$. As a byproduct of our construction, We give an alternative proof of Theorem $1.1 $ in $[13]$ without graph theory.}

\section{Introduction}

The concept of Stanley depth was first introduced by Stanley in $[20]$. Let us briefly recall its definition here.

Let $S=K[x_1, \cdots, x_n]$ be the naturally $\Z^n$-graded polynomial ring in $n$ variables over a field $K$. A $\textit{Staney decomposition}$ of a finitely generated $\Z^n$-graded $S$-module $M$ is a finite direct sum of $K$-vector spaces

$$\mathscr{D}: M=\overset{m}{\underset{i=1}{\bigoplus}} u_i K[Z_i]$$

\noindent where each $u_i \in M$ is homogeneous, and $Z_i$ is a subset of $\{x_1, \cdots, x_n\}$. The number $\sdepth \mathscr{D}=\min\{|Z_i|: i=1, \cdots, m\}$ is called the $\textit{Staney depth of}$ $\mathscr{D}$. The $\textit{Staney depth of}$ $M$ is defined to be

$$\sdepth M=\max\{\sdepth \mathscr{D}: \mathscr{D} \hskip .1 cm \text{is a Stanley decomposition of M}\}.$$

In $[20]$, Stanley conjectures that $\depth M \le \sdepth M$ for all finitely generated $\Z^n$-graded $S$-module $M$. Although this conjecture remains open in general, it has been confirmed in several special cases, see, for example, $[1$-$3]$, $[5$-$12]$, $[17]$ and $[18]$. In $[11]$, Herzog, Vladoiu and Zheng proved that the Stanley depth of $M=I/J$ can be computed in finite number of steps, where $J \subset I$ are monomial ideals of $S$. They associated $I/J$ with a poset $P_{I/J}$ and showed that the Stanley depth of $I/J$ is determined by partitioning $P_{I/J}$ into suitable intervals. Since their pioneer work, some progress has been made in calculating the Stanley depths of $I$ and $S/I$. See, for instance, $[4]$, $[13$-$16]$ and $[19]$.

In this paper, we investigate $\textit{squarefree Veronese ideal}$ $I_{n,d}$ generated by all squarefree monomials of degree $d$ in $S$. Cimpoea\c{s} $[7, {\text Conjecture \hskip 0.05 cm 1.6}]$ and Keller, Shen, Streib and Young $[13, {\text Conjecture \hskip .05 cm 2.4}]$ conjecture that

\begin{conj} For positive integers $1 \le d \le n$, $\sdepth(I_{n,d})=\left \lfloor \binom{n}{d+1}/\binom{n}{d} \right\rfloor+d$.
\end{conj}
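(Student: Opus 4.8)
The plan is to work entirely on the combinatorial side of the correspondence of Herzog, Vladoiu and Zheng. Identifying squarefree monomials with subsets of $[n]=\{1,\dots,n\}$, the ideal $I_{n,d}$ corresponds to the truncated Boolean lattice $P_{n,d}=\{A\subseteq[n]:|A|\ge d\}$, and $\sdepth(I_{n,d})$ is the maximum, over all partitions $\mathcal{P}$ of $P_{n,d}$ into intervals $[C,D]=\{E:C\subseteq E\subseteq D\}$, of $\min\{|D|:[C,D]\in\mathcal{P}\}$. Write $q=\lfloor\binom{n}{d+1}/\binom{n}{d}\rfloor=\lfloor(n-d)/(d+1)\rfloor$, so the target value is $q+d$. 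The bound $\sdepth(I_{n,d})\le q+d$ is the easy half: in any interval partition the intervals $[A,D_A]$ whose bottom $A$ is a $d$-set are pairwise disjoint (each $d$-set, being minimal in $P_{n,d}$, is the bottom of its own interval), and if every top had size at least $q+d+1$ then each such interval would contain at least $q+1$ sets of size $d+1$, forcing $\binom{n}{d+1}\ge(q+1)\binom{n}{d}$ and contradicting the choice of $q$. Thus everything reduces to the lower bound: exhibit one interval partition of $P_{n,d}$ all of whose tops have size at least $q+d$.

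For the lower bound I would build the partition rank by rank from the bottom up, which is the setting in which the combinatorial tool of Keller, Shen, Streib and Young operates. Writing $n=q(d+1)+d+r$ with $0\le r\le d$, there are only $q+1$ nontrivial stages, $k=d,d+1,\dots,d+q$: at stage $k$, every rank-$k$ set not yet absorbed into an interval must become the bottom of a new interval $[C,D_C]$, and I would extend each such $C$ to a set $D_C\supseteq C$ of rank exactly $q+d$ (sets of rank above $q+d$ then form singleton intervals at the end). The sets $D_C$ chosen at a given stage must be chosen simultaneously so that the new intervals are pairwise disjoint and disjoint from those built earlier; this is a Hall-type feasibility condition on a bipartite containment graph between the uncovered rank-$k$ sets and the still-free rank-$(q+d)$ sets, and the tool of Keller, Shen, Streib and Young is essentially the reformulation of that condition as an inequality among binomial coefficients, with the sets absorbed at earlier stages discounted. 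The core of the proof is then a purely arithmetic verification that, under the prescribed stage-by-stage bookkeeping, these inequalities hold for every pair $(n,d)$ with $1\le d\le n$; granting that, the construction runs to completion with every top of size at least $q+d$, so $\sdepth(I_{n,d})\ge q+d$ and equality follows.

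A second route, useful as an independent check, is induction on $n$ (with bases $d=n$ and small $n$). Split $P_{n,d}$ into $\{A:n\notin A\}$, which is a copy of $P_{n-1,d}$, and $\{A:n\in A\}$, which is order-isomorphic to $P_{n-1,d-1}$ with all ranks raised by one; gluing good partitions of the two pieces (available by induction) gives a partition of $P_{n,d}$ all of whose tops have size at least $\min\bigl(d+\lfloor(n-1-d)/(d+1)\rfloor,\ d+\lfloor(n-d)/d\rfloor\bigr)$, which already equals $q+d$ unless $(d+1)\mid(n-d)$, that is, unless $r=0$. In that case the $\{A:n\notin A\}$ half falls one short, and one must instead build the two sub-partitions in a coordinated way, merging each deficient interval of the first with a matching interval of the second by adjoining $n$ to its top — once more a bipartite matching whose feasibility is a binomial count. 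As a by-product of the construction one also recovers Theorem~$1.1$ of $[13]$ without any graph theory.

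The step I expect to be the genuine obstacle, in either formulation, is the combinatorial lemma that the residual structure left once the bottom layers have been handled still admits a partition with every top of size at least $q+d$. The relevant binomial inequalities become tight precisely when $n$ is near the large end of its range $q(d+1)+d\le n\le(q+1)(d+1)+d-1$, equivalently when $r$ is close to $d$: then many rank-$k$ sets overflow stage $k$ and must be carried into stage $k+1$, and controlling how this overflow accumulates over the $q+1$ stages is what forces a constraint of the shape ``$q$ is not too large compared with $d$''. Making that overflow analysis unconditional is the crux; it is exactly at this point that the present argument needs $n\le(d+1)\lfloor(1+\sqrt{5+4d})/2\rfloor+2d$, and removing this restriction is what would upgrade the plan to a proof of the conjecture in full.
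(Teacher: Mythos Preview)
The statement is Conjecture~1.1, which the paper does \emph{not} prove in full; Theorem~1.2(1) establishes it only for $n \le (d+1)\lfloor(1+\sqrt{5+4d})/2\rfloor + 2d$, and you correctly flag this same threshold as the limit of your plan. So there is no complete proof to compare against, only the paper's partial one.

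Your route to that partial result is genuinely different from the paper's, and it carries a real gap. The paper never invokes a Hall-type or counting argument. For each rank $d+l$ it builds an \emph{explicit} map $A\mapsto f_{s+1}(\widetilde A)\cap[n]$ by embedding $A$ into a set $\widetilde A$ of size $n$ on a larger circle $[m]$, $m=(n+1)s+n$, applying the block-structure construction there with density $s+1$, and restricting back to $[n]$ (Proposition~3.3). Disjointness of the resulting intervals, and compatibility across layers, are proved by direct block-structure arguments (Propositions~3.3--3.5), not by counting. The threshold appears because the common density $k$ used at ranks $d+1,\dots,d+k-1$ must satisfy $k\le (n+1)/(d+l+1)$ for every $0<l<k$; at $l=k-1$ and $n=(d+1)k+d$ this forces $k^2-k-(d+1)\le 0$, i.e.\ $k\le\lfloor(1+\sqrt{5+4d})/2\rfloor$. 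That is a feasibility constraint on the construction itself, not an ``overflow'' of binomial inequalities, and it is tightest at $r=0$, not $r$ near $d$.

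Your first route conflates two distinct requirements. A Hall condition on the bipartite containment graph between uncovered rank-$k$ sets and free rank-$(q+d)$ sets would yield a system of distinct representatives $C\mapsto D_C$, but what the partition needs is that the whole \emph{intervals} $[C,D_C]$ be pairwise disjoint: for $C\ne C'$ one must have $C\cup C'\not\subseteq D_C\cap D_{C'}$, which is far stronger than $D_C\ne D_{C'}$ and is not delivered by any matching theorem. Your description of the KSSY tool as ``essentially the reformulation of that condition as an inequality among binomial coefficients'' is inaccurate; the KSSY tool is precisely the explicit circular assignment whose point is to sidestep matching entirely, with disjointness proved structurally (Lemma~2.2 here, Lemma~3.1 in [13]). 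Without an explicit construction, or an interval-disjointness strengthening of Hall that you do not supply, your first route does not produce a partition even in the restricted range. Your second route (split on $n\in A$ and induct) does appear in the paper as Lemma~5.1 and is used in Section~5 to reduce each range $(d+1)k+d\le n\le(d+1)k+2d$ to the anchor $n=(d+1)k+d$; but, as you note, the anchor case $r=0$ is exactly where the induction stalls, and the paper handles it not by a ``coordinated matching'' but again by the explicit circular construction.
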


A special case of Conjecture $1.1$, when $d=1$, was first proposed by Herzog et al. in $[11]$. It was settled by Bir\'o, Howard, Keller, Trotter and Young in $[4]$. Cimpoea\c{s} confirmed the above conjecture for $2d+1 \le n \le 3d$. Both Cimpoea\c{s} $[7]$ and Keller, Shen, Streib and Young $[13]$ obtained an upper bound $\sdepth(I_{n,d}) \le \left \lfloor \binom{n}{d+1}/\binom{n}{d} \right\rfloor+d$.  Based on the result in $[4]$, Keller, Shen, Streib and Young $[13]$ proved the above conjecture for $1 \le d \le n <5d+4$ and obtained a lower bound $\sdepth(I_{n,d}) \ge d+3$ if $n \ge 5d+4$. In this paper, we modify their ideas and prove the following theorem.

\begin{thm}
Let $S=K[x_1, \cdots, x_n]$ be the polynomial ring in $n$ variables over a field $K$. Let $I_{n,d}$ be the squarefree Veronese ideal generated by all squarefree monomials of degree $d$ in $S$. 

\begin{description}
\item(1) If $1 \le d \le n \le (d+1) \left \lfloor \frac{1+\sqrt{5+4d}}{2}\right\rfloor+2d$, then the Stanley depth $\sdepth(I_{n,d})=\left \lfloor \binom{n}{d+1}/\binom{n}{d} \right\rfloor+d$.

\item(2) If  $n > (d+1) \left \lfloor \frac{1+\sqrt{5+4d}}{2}\right\rfloor+2d$, then 
$\left \lfloor \frac{d+\sqrt{d^2+4(n+1)}}{2} \right\rfloor \le \sdepth(I_{n,d})  \le \left \lfloor \binom{n}{d+1}/\binom{n}{d} \right\rfloor+d$. 
\end{description}

\end{thm}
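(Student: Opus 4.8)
The plan is to translate everything into the poset language of Herzog--Vladoiu--Zheng and then run the combinatorial machinery of Keller--Shen--Streib--Young. Recall that $\sdepth(I_{n,d})$ equals the maximum over interval partitions of the poset $P_{n,d}$ (whose elements are the squarefree monomials of degree $d, d+1, \dots, n$, ordered by divisibility) of the minimum top-rank appearing in the partition; equivalently, one wants to cover every squarefree degree-$d$ monomial $A$ by an interval $[A,B]$ with $|B|$ as large as possible, subject to the intervals being pairwise disjoint. The upper bound $\sdepth(I_{n,d}) \le \lfloor \binom{n}{d+1}/\binom{n}{d}\rfloor + d$ is already available from $[7]$ and $[13]$ (it is a simple counting/pigeonhole argument on how many degree-$(d+1)$ sets can be consumed), so in both (1) and (2) only the lower bounds need proof.

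For part (1), the strategy is constructive: exhibit an explicit interval partition of $P_{n,d}$ in which every interval $[A,B]$ with $|A|=d$ has $|B| \ge \lfloor \binom{n}{d+1}/\binom{n}{d}\rfloor + d =: k+d$. The idea I would pursue, following the ``matching up the staircase'' philosophy of $[13]$ but refined, is to build the partition level by level: first match degree-$d$ sets to degree-$(d+1)$ sets via a system-of-distinct-representatives / Hall-type argument (this is where the $d=1$ theorem of Bir\'o--Howard--Keller--Trotter--Young enters, since that case is exactly a perfect-matching statement), then extend each chosen degree-$(d+1)$ set upward to a degree-$(k+d)$ set, taking care that the upward extensions can be chosen disjointly. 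The range $n \le (d+1)\lfloor (1+\sqrt{5+4d})/2\rfloor + 2d$ is precisely the regime where $k = \lfloor \binom{n}{d+1}/\binom{n}{d}\rfloor$ is small enough (roughly $k \le \lfloor(1+\sqrt{5+4d})/2\rfloor + $ small) that the greedy disjoint upward extension provably succeeds; I would verify this by a careful counting inequality comparing the number of degree-$j$ sets that must be ``reserved'' against $\binom{n}{j}$ for each $d+1 \le j \le k+d$. As a byproduct, specializing the construction to the sub-case treated in $[13]$ reproves their Theorem~1.1 without the graph-theoretic input, as advertised in the abstract.

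For part (2), in the regime $n > (d+1)\lfloor(1+\sqrt{5+4d})/2\rfloor+2d$, the greedy extension no longer reaches the conjectural value, so I would settle for the weaker explicit lower bound $\sdepth(I_{n,d}) \ge \lfloor (d+\sqrt{d^2+4(n+1)})/2\rfloor$. Write $m := \lfloor (d+\sqrt{d^2+4(n+1)})/2\rfloor$, which is the largest integer with $m(m-d) \le n+1$, i.e. roughly $m \approx \sqrt{n}$. The plan is to produce an interval partition in which every bottom degree-$d$ set is covered by an interval of top size $\ge m$; here one only needs to push each degree-$d$ set up by $m-d$ steps, and the point of the defining inequality for $m$ is that at every intermediate level the number of sets one needs is comfortably dominated by the available supply, so a clean inductive or greedy construction goes through without the delicate balancing needed for the sharp bound. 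Concretely I would partition the ground set $[n]$ into blocks and route the extensions through block-respecting chains so disjointness is manifest.

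The main obstacle, and the technical heart of the paper, will be step (1): establishing that the disjoint upward extensions exist throughout the claimed range. Guaranteeing a matching at the bottom ($d \to d+1$) is the known BHKTY result, but chaining this with many further disjoint one-step extensions requires a global accounting that does not follow from iterating Hall's condition naively, because the sets reserved at level $j$ constrain what is available at level $j+1$. The crux is to set up the right bookkeeping — presumably assigning to each degree-$d$ set a canonical ``target'' degree-$(k+d)$ set and a canonical chain connecting them, engineered so that distinct bottom sets get internally-disjoint chains — and then to check the single sharp inequality, involving $\binom{n}{d}$, $\binom{n}{d+1}$ and the floor function, that makes the whole scheme consistent exactly when $n \le (d+1)\lfloor(1+\sqrt{5+4d})/2\rfloor + 2d$. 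I expect the proof of that inequality to be an elementary but somewhat involved estimate on binomial ratios, and getting the construction and the inequality to meet at precisely this threshold is where the real work lies.
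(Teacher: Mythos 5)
Your proposal correctly sets the stage (poset $P_{I_{n,d}}$, reduction to interval partitions, upper bound imported from $[7],[13]$), but it is a research plan, not a proof: the entire content of part (1) is delegated to an unspecified ``canonical chain'' construction and an unspecified binomial inequality, which is precisely the hard part. Concretely, you propose to match degree-$d$ sets to degree-$(d+1)$ sets via a Hall/SDR argument (citing BHKTY for $d=1$) and then iterate one-step extensions upward; you yourself acknowledge that iterating Hall naively does not work because reservations at level $j$ constrain level $j+1$. Nothing in the proposal resolves this, and I do not see how a purely counting-based SDR approach could: the paper does not build the partition one rank at a time at all. Instead it jumps directly from each level-$(d+\ell)$ set $A$ to a canonical level-$(d+\ell+s)$ set via a deterministic assignment $A \mapsto f_{s+1}(\widetilde A)\cap[n]$, where $\widetilde A = A\cup\{n+1,\dots,2n-(d+\ell)\}$ lives in a \emph{higher} circular representation $[m]$, $m=(n+1)s+n$; disjointness (Propositions~3.3 and~3.5) is then a consequence of the rigidity of the block structure of $[13]$, not of any counting. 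Your sketch never reaches for anything like this ``pad $A$ with a tail and run the block structure on a bigger circle'' device, which is the actual new idea of the paper.

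Your interpretation of the threshold is also off. You suggest $n \le (d+1)\lfloor(1+\sqrt{5+4d})/2\rfloor+2d$ is the regime where a greedy-extension counting inequality holds. In the paper the threshold is purely a density constraint: to run the construction at levels $d+1,\dots,d+k-1$ with density $k$, one needs $k \le \lfloor (n+1)/(d+\ell+1)\rfloor$ for all $\ell<k$, i.e. $k(d+k)\le n+1$; since the worst case in a band is $n=(d+1)k+d$, this reduces to $k(k-1)\le d+1$, i.e. $k\le\lfloor(1+\sqrt{5+4d})/2\rfloor$, which is exactly the stated bound. Similarly, your part (2) plan (``partition $[n]$ into blocks and route block-respecting chains'') gestures at block structures but does not say how to choose the density or why the resulting intervals at different levels and densities are disjoint; the paper handles this with Proposition~3.5, a cross-density disjointness lemma whose hypothesis $(d+l+1)\eta\ge(d+q+1)\delta$ is nontrivial and is what forces the choice $s=\lfloor(-(d+2)+\sqrt{d^2+4(n+1)})/2\rfloor$. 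In short: right framework, right upper bound, wrong mechanism for the lower bound, and the essential construction is missing.
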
  

This paper is organized as follows: in section $2$, we review the method of Herzog et al. for associating a poset with a squarefree monomial ideal $I$ and recall some combinatorial tools used in $[13]$. In section 3, we modify their ideas and construct intervals through higher circular representation. In section $4$, we prove the Theorem $1.2$. In section $5$ we give an alternative proof of Theorem $1.1$ in $[13]$ without using graph theory.  

{\bf Acknowledgments} We would like to express our gratitude to Dr. Stephen J. Young for pointing out a gap in the first version of this paper.

\section{Interval partitions and preliminary results}

\subsection{Poset associated with a squarefree monomial ideal $I$}

For a positive integer $n$, let $[n]=\{1, 2, \cdots, n\}$. Let $\N$ be the set of non-negative integers. For each ${\bf c}=(c(1), \cdots, c(n)) \in \N^n$, denote ${\bf x^c}=\prod_i x_i^{c(i)}$. The monomial ${\bf x^c}=\prod_i x_i^{c(i)}$ is squarefree when $c(i)=0$ or $1$ for $1 \le i \le n$. When ${\bf x^c}$ is squarefree, denote its support by $supp ({\bf x^c})=\{i \big\vert c(i)=1\} \subset [n]$. Let $P_{I}$ be the poset consisting of all the supports of squarefree monomials in $I$ and their supersets in $[n]$. It is a Boolean subalgebra of subsets of $[n]$ partially ordered by inclusion. For every $A, B \in P_{I}$ with $A \subseteq B$, define the interval $[A, B]$ to be $\{C \big\vert A \subseteq C \subseteq B\}$.

Let $\mathscr{P}: P_{I}= \cup_{i=1}^r [A_i, B_i]$ be a partition of $P_{I}$ and ${\bf a_i} \in \N^n$ be the tuples such that $Supp ({\bf x^{a_i}})=A_i$. Then there is a Stanley decomposition $\mathscr{D(P)}$ of $I$:

$$\mathscr{D(P)}: I=\overset{r}{\underset{i=1}{\bigoplus}} {\bf x^{a_i}} K[\{x_j \big\vert j \in B_i\}].$$

The $\sdepth ({\mathscr{D(P)}})$ is $\min \{|B_1|, \cdots, |B_r|\}$. Moreover, Herzog et al. showed in $[11]$ that if $I$ is a squarefree monomial ideal, then

$$\sdepth (I)=\max \hskip .1 cm \{\sdepth (\mathscr{D(P)}) \hskip .1 cm \big\vert \hskip .1 cm \mathscr{P} \hskip .1 cm \text{is a partition of} \hskip .1 cm P_I\}.$$

By applying this connection, in $[7, {\text Theorem \hskip .05 cm 1.1}]$ and $[13, {\text Lemma \hskip .05 cm 2.2}]$, the authors proved $\sdepth(I_{n,d}) \le \left \lfloor \binom{n}{d+1}/\binom{n}{d} \right\rfloor+d$. If we can prove that there is a partition of $\mathscr{P}: P_I= \cup_{i=1}^r [A_i, B_i]$ such that $|B_i| \ge \left \lfloor \binom{n}{d+1}/\binom{n}{d} \right\rfloor+d$, Conjecture $1.1$ follows immediately.

Before we proceed to the proof of Theorem $1.2$, let us recall some combinatorial tools developed in $[13]$.

\subsection{Block Structures on $[n]$}

Given a positive integer $n$, we can evenly distribute the points $1, 2, \dots, n$ in clockwise direction around a circle in the plane. In $[13]$, this arrangement is called the circular representation of $[n]$. Given the circular representation of $[n]$, a block is a subset of consecutive points on the circle. For $i, j \in [n]$ we denote by $[i,j]$ the block starting at $i$ and ending at $j$ when traversing the circular representation of $[n]$ clockwise. Given a subset $A \subseteq [n]$ and a density $\delta \ge 1$, the $\textit{block structure of }$ $A$ $\textit{with respect to }$ $\delta$ is a partition of the elements of the circular representation of $[n]$ into clockwise-consecutive blocks $B_1, G_1, B_2, G_2, \cdots, B_p, G_p$ such that 

\begin{description} [itemsep=0pt, parsep=0pt]
\item(i) the first (going clockwise) element $b_i$ of $B_i$ is in $A$;
\item(ii) for all $i \in [p]$, $G_i \cap A=\emptyset$;
\item(iii) for all $i \in [p]$, $\delta \cdot |A \cap B_i|-1<|B_i| \le \delta \cdot |A \cap B_i|$;
\item(iv) for all $y \in B_i$ such that $[b_i,y] \subsetneq B_i$, $|[b_i,y]|+1 \le \delta \cdot |[b_i,y] \cap A|$.
\end{description}
 
The following lemma was proved in $[13]$.

\begin{lem} (Lemma $2.7$ in $[13]$) For $1 \le \delta \le (n-1)/|A|$, the block structure for a set $A$ with respect to $\delta$ on $[n]$ exists and is unique.
\end{lem}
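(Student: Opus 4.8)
\medskip
\noindent\textbf{Proof plan.} Throughout I write $\chi\colon[n]\to\{0,1\}$ for the indicator of $A$, extended $n$-periodically to $\Z$, and I introduce the ``height'' function $F\colon\Z\to\R$ with $F(0)=0$ and increments $F(k)-F(k-1)=\delta\chi(k)-1$. Then $F(k+n)=F(k)-c$ where $c:=n-\delta|A|$, and the hypothesis $\delta\le(n-1)/|A|$ is exactly the statement $c\ge1$ (we may assume $1\le|A|\le n-1$, the only case in which the hypothesis is non-vacuous). Identifying a point with its walk-index, the $\delta$-\emph{surplus} $\delta|A\cap[b,y]|-|[b,y]|$ of a clockwise arc $[b,y]$ equals $F(y)-F(b-1)$, and a direct translation shows that (iii)+(iv) say precisely this: the block $B$ beginning at $b\in A$ is the \emph{unique shortest} clockwise arc starting at $b$ whose surplus lies in $[0,1)$ -- equivalently, along which the surplus, being $\ge1$ on every proper initial sub-arc, first drops below $1$; moreover $c\ge1$ forces $|B|\le\delta|A\cap B|\le\delta|A|\le n-1<n$. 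In particular the block starting at a prescribed point of $A$ is completely determined, independent of the rest; hence a block structure is determined by its set of block-starts $T\subseteq A$, and both assertions of the lemma reduce to: there is one and only one admissible choice of $T$.

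\medskip
\noindent For \emph{existence} I would run the greedy rule starting at $b_1:=v^{*}+1$, where $v^{*}$ is the largest minimizer of $F$ over one full period. A minimizer is followed by a nonnegative increment, so $\chi(v^{*}+1)=1$ and this start is legal. Using $c\ge1$ one checks $F(v^{*}+n)\le F(j)$ for every $j$ strictly between $v^{*}$ and $v^{*}+n$; combined with ``$|B|<n$ for every block $B$'' (which keeps the first block short of the seam), this forbids any block produced by the greedy rule from straddling the seam $v^{*}+n$, since a later straddling block would make $v^{*}+n$ one of its proper-prefix points and hence force $F(v^{*}+n)$ strictly above the level of $F$ just before that block's first point -- a level that is $\ge F(v^{*}+n)$ by the displayed inequality. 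Finally, since $\chi(v^{*}+n+1)=\chi(v^{*}+1)=1$, the point $v^{*}+n$ must be the last point of a (possibly empty) gap, so the greedy process realigns there; being deterministic it is then $n$-periodic, i.e. it winds around the circle exactly once, and the one period it produces is a block structure. (The boundary degeneracy, where $v^{*}$ sits at the end of the period, and the case $\delta=1$, in which every block is a single point, are disposed of by a harmless shift of the reference point for $F$.)

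\medskip
\noindent For \emph{uniqueness} I would use two structural facts. First, the greedy blocks $B(a)$, $a\in A$, viewed as clockwise arcs of length $<n$, form a laminar family: if $a'$ lies in the interior of $B(a)$ then $B(a')\subseteq B(a)$, which follows by comparing first-passage times of $F$ on $\Z$ (the proper-prefix bound at $a'-1$ shows the stopping time of $B(a')$ cannot exceed that of $B(a)$). Second, two \emph{maximal} greedy blocks cannot overlap: by laminarity an overlap consisting of a single arc would put one inside the other, contradicting maximality; and an overlap of ``linked'' type on the circle would force one of them, say $B(a')$, to start interior to $B(a)$ and extend clockwise past the start $a$ of $B(a)$, so the point $a+n-1$ (just before the next copy of $a$) would be a proper-prefix point of $B(a')$, giving $F(a+n-1)\ge F(a'-1)+1\ge F(a-1)+2$, whereas the $n$-periodicity of the increments gives $F(a+n-1)=F(a-1)-c$ -- impossible for $c\ge1$. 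Consequently the maximal greedy blocks are pairwise disjoint, cover $A$, and together with the complementary $A$-free arcs they form a block structure; and in any admissible block structure every block is a greedy block and any two blocks are disjoint, so every block is maximal, so that structure is the one just described.

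\medskip
\noindent The genuinely delicate point -- in both halves -- is the ``winds around exactly once'' phenomenon: ``no straddling block'' in the existence argument and ``no linked overlap of maximal blocks'' in the uniqueness argument. Once $F$ is set up, everything else is bookkeeping, but this is precisely where the hypothesis $\delta\le(n-1)/|A|$ (that is, $c\ge1$, strictly negative drift per turn) is indispensable; the remaining care goes into pinning down the seam, the empty-gap case, and the boundary and $\delta=1$ degeneracies.
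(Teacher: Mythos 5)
The paper does not prove this lemma at all: it is quoted verbatim as Lemma~2.7 of~[13] and imported by citation (``The following lemma was proved in $[13]$''), so there is no in-paper argument to compare your proposal against. That said, your proposal is a genuine (and essentially complete) proof and deserves comment on its own terms.

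Your reduction is the right one: encoding $A$ in the partial-sum walk $F$ with increments $\delta\chi(k)-1$ turns conditions (iii)--(iv) into a first-passage description (the block at $b$ ends at the first $y\ge b$ with $F(y)<F(b-1)+1$, which automatically lands in $[0,1)$ because every down-step is exactly $-1$), and turns the hypothesis $\delta\le(n-1)/|A|$ into the strictly negative drift $F(k+n)-F(k)=-c$ with $c\ge1$. This is exactly the cycle-lemma mechanism one expects here, and both the existence half (greedy sweep from just past the last minimizer, with $c\ge1$ preventing any block from straddling the seam) and the uniqueness half (laminarity of greedy blocks by comparing first-passage thresholds, then matching any admissible partition to the maximal greedy blocks) are sound. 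Two small remarks. First, your separate ``linked overlap'' case in the uniqueness argument is redundant: your laminarity computation already yields $z'\le z$ whenever $a'$ lies inside $B(a)$, which rules out a wrap-around just as well as the $F(a+n-1)=F(a-1)-c$ contradiction does; keeping both does no harm, but you could drop one. Second, the step ``every block is a greedy block and any two blocks are disjoint, so every block is maximal'' is stated too quickly -- it needs the additional observation that the blocks of an admissible partition cover $A$, so that a hypothetical larger greedy block $B(a')\supsetneq B_i$ would force $a'$ into some $B_j$ and hence (by laminarity) $B_i\subsetneq B(a')\subseteq B_j$, violating disjointness. Finally, your acknowledged hand-waving around $\delta=1$ and the period boundary is benign but should be made explicit if this were to be written up: for $\delta=1$ the block structure is trivially the singletons of $A$ separated by the $A$-free arcs, which can simply be verified directly rather than forced through the cycle-lemma machinery.
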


We denote the set $\{B_1, B_2, \cdots, B_p\}$ by blocks$_{\delta}(A)$ and the union $B_1 \cup B_2 \cup \cdots \cup B_p$ by $\mathscr{B}_\delta (A)$. Each $G_i$ is called a gap. We denote the set $\{G_1, G_2, \cdots, G_p\}$ by gaps$_{\delta}(A)$ and the union $G_1 \cup G_2 \cup \cdots \cup G_p$ by $\mathscr{G}_\delta (A)$. Given a density $\delta$, let $f_\delta$ be the function that maps each $A \subseteq [n]$ with $|A| \le (n-1)/\delta$ to $A \cup \mathscr{G}_\delta (A) \subseteq [n]$. Throughout this paper we will focus on intervals of the form $[A, f_\delta (A)]$, or its extended version that we will explain in Section $3$.

We also need the following lemmas in $[13]$.

\begin{lem} (Lemma $3.1$ in $[13]$) Given a positive integer $n$, let $A, A' \subseteq [n]$ with $A \ne A'$ and $|A|=|A'|$, and let $\delta \ge 1$. If $|f_\delta(A)|-|A| \le \delta-1$, then $[A, f_\delta(A)]$ does not intersect $[A', f_\delta(A')]$.
\end{lem}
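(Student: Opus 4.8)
The plan is to reformulate the statement in terms of gaps and then argue by contradiction. Since $A$ and $\mathscr{G}_\delta(A)$ are disjoint with union $f_\delta(A)$, the interval $[A,f_\delta(A)]$ is exactly $\{\,A\cup T : T\subseteq \mathscr{G}_\delta(A)\,\}$, and the hypothesis $|f_\delta(A)|-|A|\le \delta-1$ says precisely that $|\mathscr{G}_\delta(A)|\le \delta-1$. Suppose now, for contradiction, that some $C$ lies in $[A,f_\delta(A)]\cap[A',f_\delta(A')]$. Then $A\cup A'\subseteq C\subseteq f_\delta(A)\cap f_\delta(A')$, which yields the two containments
$$A'\setminus A\ \subseteq\ \mathscr{G}_\delta(A),\qquad A\setminus A'\ \subseteq\ \mathscr{G}_\delta(A').$$
Write $D=A\setminus A'$ and $D'=A'\setminus A$. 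Since $|A|=|A'|$ we get $|D|=|D'|=:k$, with $k\ge 1$ because $A\ne A'$; moreover $k\le|\mathscr{G}_\delta(A)|\le\delta-1$.

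Next I would run a block-by-block comparison of $A$ and $A'$. Because $D'\subseteq\mathscr{G}_\delta(A)$, no point of $D'$ lies in any block of $A$; since also $A\cap\mathscr{G}_\delta(A)=\emptyset$, for every block $B_i\in\mathrm{blocks}_\delta(A)$ we have $A'\cap B_i=(A\cap B_i)\setminus D$, while inside the gaps of $A$ the set $A'$ carries only the $k$ points of $D'$. Now invoke property (iii): $|B_i|>\delta|A\cap B_i|-1$. If $B_i$ contains at least one point of $D$, then, using $\delta\ge 1$,
$$|B_i|\ >\ \delta|A\cap B_i|-1\ \ge\ \delta\bigl(|A\cap B_i|-1\bigr)\ \ge\ \delta\,|A'\cap B_i|,$$
so every block of $A$ from which $A'$ has dropped a point is \emph{strictly under-dense} for $A'$. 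Symmetrically, $A\cap B'_j=(A'\cap B'_j)\setminus D'$ for each $B'_j\in\mathrm{blocks}_\delta(A')$, and any such block meeting $D'$ is strictly under-dense for $A$.

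The crux is to convert this local under-density, together with $|\mathscr{G}_\delta(A)|\le\delta-1$, into a contradiction with Lemma $2.1$ (existence and uniqueness of the block structure of $A'$) and with properties (iii)--(iv) for $A'$. The tool I would use is the discrepancy walk $w_S(t)=\delta\,|S\cap\{1,\dots,t\}|-t$ read around the circular representation: cutting the circle at the start of a block, properties (iii)--(iv) say that for $S=A$ this walk climbs to a positive level inside each block and drops across each gap, so that its low points pin down $\mathrm{blocks}_\delta(A)$; passing from $A$ to $A'$ adds $+\delta$ to $w$ at each point of $D'$ (which sits low, in a gap of $A$) and subtracts $\delta$ at each point of $D$ (which sits strictly inside a block of $A$, where that point is ``needed'' to sustain the prefix-density inequality of (iv)). One then shows that the perturbed walk $w_{A'}$ — which, having the same net drift $\delta|A|-n$ as $w_A$, differs from $w_A$ by the periodic quantity $\delta(|D'\cap\{1,\dots,t\}|-|D\cap\{1,\dots,t\}|)$ — cannot match the (iii)--(iv) profile of \emph{any} legal block structure whose total gap still accounts for the $k$ points of $D$, unless $k=0$, i.e.\ $A=A'$. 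I expect the bookkeeping here to be the real obstacle: the natural cut points for the walks of $A$ and of $A'$ need not coincide, one must control the cyclic wrap-around and the fact that the drift $\delta|A|-n$ is negative (so ``the walk stays nonnegative'' must be replaced by a relative statement), and the degenerate regime $1\le\delta<2$, which forces every block to be a singleton, should be handled first.
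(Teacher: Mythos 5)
Your first paragraph is correct: a common element $C$ forces $A'\setminus A\subseteq\mathscr{G}_\delta(A)$ and $A\setminus A'\subseteq\mathscr{G}_\delta(A')$, and $k=|A\setminus A'|=|A'\setminus A|$ satisfies $1\le k\le|\mathscr{G}_\delta(A)|\le\delta-1$. The block-by-block observation that $A'\cap B_i=(A\cap B_i)\setminus D$ for every block $B_i$ of $A$, and the resulting strict under-density $|B_i|>\delta|A'\cap B_i|$ whenever $B_i\cap D\ne\emptyset$, are also fine. But the proposal stops exactly where a proof would have to start. The step you yourself call ``the crux'' — turning this local under-density, together with $|\mathscr{G}_\delta(A)|\le\delta-1$, into a violation of (iii)--(iv) for $A'$ via the discrepancy walk $w_S(t)=\delta\,|S\cap\{1,\dots,t\}|-t$ — is never carried out. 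You list the obstacles (misaligned cut points for the two walks, the cyclic wrap-around, negative drift making a nonnegativity argument unusable as stated, the degenerate regime $1\le\delta<2$), and these are precisely the places such an argument typically fails, yet nothing concrete is offered to close any of them. As written this is a strategy outline, not a proof.

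For comparison, the paper treats this (through Proposition 3.4, which generalizes the lemma and which the paper does prove) by a purely local block-chase that sidesteps the global walk entirely: choose $a\in A'\setminus A$, which the intersection hypothesis places in $\mathscr{G}_\delta(A)$; then $|\mathscr{G}_\delta(A)|\le\delta-1$ together with properties (iii)--(iv) and the fact that $a\in A'$ lies inside a block of the block structure of $A'$ forces that block to extend clockwise across the adjacent gap of $A$, to pick up the first element of the next block of $A$ (which must then lie in $A'$ since $A\subseteq A'\cup\mathscr{G}_\delta(A')$), to absorb all $A$-points of that block, and so on around the circle until $A\subseteq A'$; equal cardinalities then give $A=A'$, the desired contradiction. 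If you want to salvage your attempt, replacing the walk by this chase is the shorter route; the decomposition into $D$ and $D'$ becomes unnecessary, and the regime $1\le\delta<2$, where every block is a singleton, is handled at once rather than as a separate case.
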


\begin{lem} (Lemma $3.2$ in $[13]$) Let $d$ be a positive integer and $k$ a nonnegative integer and let $n=(d+1)k+d$. Let $A \subseteq [n]$ be a $d$-set. Then $|f_{k+1}(A)|=d+k$.
\end{lem}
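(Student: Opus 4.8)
The plan is to unwind the definition of $f_\delta$ with $\delta = k+1$ and exploit the fact that this density is an \emph{integer}. First I would check that the object in the statement makes sense. Since $|A| = d$ and $n = (d+1)k + d = (d+1)(k+1) - 1$, one has $(n-1)/\delta = (d+1) - \tfrac{1}{k+1} \ge d$ precisely when $k \ge 1$. So for $k \ge 1$ we have $|A| \le (n-1)/\delta$, and Lemma 2.3 (Lemma 2.7 in [13]) guarantees that the block structure $B_1, G_1, \dots, B_p, G_p$ of $A$ with respect to $\delta$ exists and is unique, and $f_\delta(A) = A \cup \mathscr{G}_\delta(A)$ is defined. (The case $k = 0$ is degenerate: then $n = d$, $A = [n]$, there are no gaps, and $|f_1(A)| = |A| = d = d + 0$.)

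The crucial observation is this: condition (iii) in the definition of the block structure reads $\delta \cdot |A \cap B_i| - 1 < |B_i| \le \delta \cdot |A \cap B_i|$ for every $i \in [p]$. Since $\delta = k+1 \in \Z_{>0}$ and $|A \cap B_i| \in \Z_{\ge 0}$, the number $\delta \cdot |A \cap B_i|$ is an integer, and the half-open interval $(\delta|A\cap B_i| - 1,\ \delta|A\cap B_i|]$ contains exactly one integer, namely $\delta|A\cap B_i|$ itself. Hence $|B_i| = \delta \cdot |A \cap B_i|$ for each $i \in [p]$; that is, every block is "saturated."

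Next I would sum this identity over all blocks. Because the blocks and gaps partition the circular representation of $[n]$ and each gap is disjoint from $A$ by (ii), we have $\sum_{i=1}^{p} |A \cap B_i| = |A| = d$ and $\sum_{i=1}^p |B_i| = |\mathscr{B}_\delta(A)|$. Therefore $|\mathscr{B}_\delta(A)| = \delta \sum_{i=1}^p |A \cap B_i| = (k+1)d$, and consequently $|\mathscr{G}_\delta(A)| = n - |\mathscr{B}_\delta(A)| = ((d+1)k + d) - (k+1)d = k$. Since $A$ and $\mathscr{G}_\delta(A)$ are disjoint (again by (ii)), $|f_\delta(A)| = |A \cup \mathscr{G}_\delta(A)| = |A| + |\mathscr{G}_\delta(A)| = d + k$, as claimed.

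As for difficulties, there is no serious obstacle here: the whole content is that an integer density forces the inequality in (iii) to become an equality block by block, after which the computation is pure bookkeeping on the partition of $[n]$ into blocks and gaps. The only points needing a moment's care are verifying the hypothesis $|A| \le (n-1)/\delta$ of Lemma 2.3, which is exactly where the precise shape $n = (d+1)k + d$ is used, and disposing of the degenerate case $k = 0$.
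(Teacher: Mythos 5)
Your proof is correct and self-contained. The paper itself does not prove this statement: it simply cites it as Lemma~3.2 of reference~[13], so there is no in-paper argument to compare against. Your argument, exploiting that an integer density $\delta=k+1$ forces condition~(iii) to become the equality $|B_i|=\delta\,|A\cap B_i|$ block by block and then summing, is exactly the natural route, and the bookkeeping $|\mathscr{B}_\delta(A)|=(k+1)d$, $|\mathscr{G}_\delta(A)|=n-(k+1)d=k$, $|f_\delta(A)|=d+k$ is all right.

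One small arithmetic slip: from $n=(d+1)(k+1)-1$ you get $n-1=(d+1)(k+1)-2$, so $(n-1)/(k+1)=(d+1)-\tfrac{2}{k+1}$, not $(d+1)-\tfrac{1}{k+1}$. With the correct value the inequality $(n-1)/(k+1)\ge d$ is equivalent to $\tfrac{2}{k+1}\le 1$, i.e.\ $k\ge 1$, so your stated conclusion ``precisely when $k\ge 1$'' is still right; only the intermediate expression is off. The degenerate case $k=0$ is handled sensibly, though strictly speaking $f_1$ is not defined by the paper's convention when $|A|>(n-1)/\delta$; declaring $f_1(A)=A$ when $A=[n]$ is the only reasonable reading and gives the claimed count $d+0$.
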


\begin{lem} (Lemma $3.5$ in $[13]$) Let $d, k$ and $l$ be positive integer, $n=(d+1)k+d$ and $\mathscr{I}_{n,d, k+1}=\{[A, f_{k+1}(A)] \big\vert A \subseteq [n], |A|=d\}$. Suppose $D_l$ is a $(d+l)$-set that is not covered by any element of $\mathscr{I}_{n,d, k+1}$. Then there is no superset of $D_l$ that is covered by an element of $\mathscr{I}_{n,d, k+1}$.
\end{lem}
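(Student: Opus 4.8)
The plan is to prove the following strengthening of the contrapositive: \emph{if $E \subseteq [n]$ is covered by some element of $\mathscr{I}_{n,d,k+1}$ and $D \subseteq E$ with $|D| \ge d$, then $D$ is covered by some element of $\mathscr{I}_{n,d,k+1}$.} Applying this with $D = D_l$ gives the lemma at once, since $D_l$ is not covered and hence cannot lie inside any covered set. Throughout I use the block-structure description of covering: $C$ is covered by $[A, f_{k+1}(A)]$ if and only if $A \subseteq C$ and $C \cap \mathscr{B}_{k+1}(A) = A$, equivalently $C \setminus A \subseteq \mathscr{G}_{k+1}(A)$. Since $k+1$ is an integer, condition (iii) forces every block $B_i$ of a $d$-set $A$ to have size exactly $(k+1)\,|A \cap B_i|$, whence $|\mathscr{B}_{k+1}(A)| = (k+1)d$ and $|\mathscr{G}_{k+1}(A)| = n-(k+1)d = k$ (in agreement with Lemma $2.3$). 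In particular every covered set has size at most $d+k$, so the statement to be proved is vacuous unless $d \le |D| \le d+k$.

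I would prove the strengthened statement by induction on $|E|-|D|$. The base case $E=D$ is immediate. For the inductive step, let $[A, f_{k+1}(A)]$ cover $E$ with $|E|>|D|$. If $A \subseteq D$, then $A \subseteq D \subseteq E \subseteq f_{k+1}(A)$, so $[A, f_{k+1}(A)]$ already covers $D$. Otherwise I produce a covered $E'$ with $D \subseteq E' \subsetneq E$ and $|E'|=|E|-1$ and conclude by induction. If some $x \in E \setminus D$ lies outside $A$, take $E'=E\setminus\{x\}$, which is still covered by $[A, f_{k+1}(A)]$. In the remaining case $\emptyset \ne E \setminus D = A \setminus D \subseteq A$: pick $y \in A \setminus D$, let $z$ be the first element of $E \setminus A$ met while traversing the circular representation of $[n]$ counterclockwise starting at $y$ (this exists because $|E \setminus A| = |E|-d \ge 1$), and put $A' = (A \setminus \{y\}) \cup \{z\}$ and $E' = E \setminus \{y\}$. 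Then $|A'| = d$, $A' \subseteq E'$, $D \subseteq E'$, and a routine set computation gives $E' \setminus A' = (E \setminus A) \setminus \{z\}$. Hence it remains only to verify
\[
(E \setminus A) \setminus \{z\} \subseteq \mathscr{G}_{k+1}(A'),
\]
which is exactly the assertion that $E'$ is covered by $[A', f_{k+1}(A')]$; when $|E \setminus A|=1$ this inclusion is empty and there is nothing to check.

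The main obstacle is this last inclusion, that is, understanding precisely how the block structure of a $d$-set changes when one anchor $y$ is deleted and one nearby point $z$ is promoted to an anchor. The facts to be exploited are that every element of $E \setminus A$ lies in a gap of $A$; that, by the choice of $z$, no element of $E \setminus A$ other than $z$ lies on the clockwise arc from $z$ to $y$; that on the complementary clockwise arc from $y$ to $z$ the anchor sets $A$ and $A'$ coincide; and that all the gaps together contain only $k$ points. Using the uniqueness and the defining properties (i)--(iv) of block structures (Lemma $2.1$), one should be able to show that $\mathscr{B}_{k+1}(A')$ agrees with $\mathscr{B}_{k+1}(A)$ outside a controlled neighbourhood of the arc joining $z$ and $y$, and that the part of $\mathscr{B}_{k+1}(A')$ lying on that arc is an initial segment beginning at $z$ which meets $E$ only in $\{z\}$; consequently no element of $(E \setminus A) \setminus \{z\}$ is swallowed by a block of $A'$. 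I expect this verification to require a case split on whether $y$ is the first element of its block in $A$ and some careful bookkeeping with gap lengths, and to be the technically heaviest part of the argument; it is also plausible that the auxiliary results of $[13]$ preceding Lemma $3.5$ there afford a shorter derivation.
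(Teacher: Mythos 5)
Your overall strategy—proving the strengthened contrapositive (any subset of size at least $d$ of a covered set is itself covered) by induction on $|E|-|D|$—is plausible, and it is genuinely different from the route the paper takes. The paper, following the proof of Lemma 3.5 in [13] (reproduced and generalized in the proof of Proposition 3.3 here), argues by contradiction via an extremal choice: among all intervals $[A, f_{k+1}(A)]$ covering a superset of $D_l$, pick one minimizing $|X \cap A|$ where $X = f_{k+1}(A)\setminus D_l$, and then perform a chain of \emph{single-step} anchor swaps $A^{(i+1)} = (A^{(i)}\setminus\{x_i\})\cup\{x_{i+1}\}$, where $x_{i+1}$ is the very next gap point counterclockwise. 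The technical engine is the precise identity $f_{k+1}(A^{(i+1)}) = (f_{k+1}(A^{(i)}) \setminus \{x_i\}) \cup \{z_i\}$ with $z_i$ the last element of the block containing $x_i$, which describes exactly the one-element change in $f$ at each step; this lets the invariant $|X^{(i)}\cap A^{(i)}|$ be tracked and eventually decreased, contradicting minimality.

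The genuine gap in your argument is the unproved inclusion $(E\setminus A)\setminus\{z\} \subseteq \mathscr{G}_{k+1}(A')$, which you flag yourself as ``the technically heaviest part.'' Your single swap $A\mapsto A'=(A\setminus\{y\})\cup\{z\}$ is a \emph{multi-step} jump: $z$ is the first element of $E\setminus A$ counterclockwise from $y$, but there may be several gap points of $A$ (not belonging to $E$) strictly between $y$ and $z$. Consequently the block structure of $A'$ can differ from that of $A$ in a much less local way than under the one-gap-point swaps used in [13], and the informal justification given (uniqueness of the block structure, the two arcs, ``careful bookkeeping'') does not yet establish that no element of $(E\setminus A)\setminus\{z\}$ gets absorbed into a block of $A'$. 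In fact your claim can be \emph{derived} by iterating the single-step identity above—since none of the intermediate gap points $x_1,\dots,x_{j-1}$ between $y$ and $z=x_j$ lies in $E$, the sets removed from $f$ along the way are disjoint from $E'$—but that derivation uses exactly the lemma from [13] you are trying to re-prove, so as written the argument is circular or incomplete. To make your route self-contained you would need to prove either the one-step identity or your multi-step inclusion from first principles from properties (i)--(iv), and the latter looks at least as hard as the extremal argument the paper adopts.
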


In the next section, we introduce the higher circular representation that extends these results.

\section{Constructing intervals through higher circular representation}

For any $0 \le l <k$, let $s$ be a fixed positive integer less than or equal to $\left \lfloor \frac{n-d-l}{d+l+1}\right\rfloor$.  Then the integer $m=(n+1)s+n$ satisfies the following properties:

\begin{lem}
(1) $m>n$, (2) $0 \le s+1 \le \frac{m-1}{n}$, and (3) $\frac{m-n}{s+1} \le n-(d+l) < m-n$.
\end{lem}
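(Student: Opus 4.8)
The statement is a purely arithmetic lemma about the integer $m = (n+1)s + n$, under the standing hypotheses $0 \le l < k$ and $1 \le s \le \left\lfloor \frac{n-d-l}{d+l+1}\right\rfloor$. (Implicitly $d+l+1$ divides into $n-d-l$ enough times that the floor is at least $1$, so $n \ge 2(d+l)+1$.) I would simply verify the three claims in order, each by a short direct estimate.

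\medskip

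\emph{Part (1): $m > n$.} Since $s \ge 1$ we have $m = (n+1)s + n \ge (n+1) + n = 2n+1 > n$. Done in one line.

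\medskip

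\emph{Part (2): $0 \le s+1 \le \frac{m-1}{n}$.} The left inequality is trivial since $s \ge 1 > -1$. For the right one, observe $m - 1 = (n+1)s + n - 1 = (s+1)n + s - 1$. Hence $\frac{m-1}{n} = (s+1) + \frac{s-1}{n} \ge s+1$ because $s \ge 1$ forces $s - 1 \ge 0$. This is the line-by-line algebra; the point is just that $m$ was \emph{designed} so that $s+1$ plays the role of a "density" $\delta$ with $\delta \le (m-1)/|A|$ for $|A| = n$, matching the hypothesis of Lemma~2.1 (and of Lemmas~2.3, 2.4, 2.5) with the circle size $m$ and the set size $n$. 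So part (2) is exactly the admissibility condition needed to invoke the block-structure machinery of Section~2 on the larger circular representation of $[m]$.

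\medskip

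\emph{Part (3): $\frac{m-n}{s+1} \le n-(d+l) < m-n$.} Here $m - n = (n+1)s$. The upper bound $n - (d+l) < (n+1)s$ is easy: since $s \ge 1$ it suffices that $n - (d+l) < n+1$, which holds because $d + l \ge 1 > -1$ (indeed $d \ge 1$). For the lower bound, $\frac{m-n}{s+1} = \frac{(n+1)s}{s+1}$, and I want $\frac{(n+1)s}{s+1} \le n - (d+l)$, i.e. $(n+1)s \le (s+1)(n-d-l)$, i.e. (expanding and cancelling the $ns$ terms) $s \le (s+1)(n-d-l) - (n+1)s + ns$. Rearranging cleanly: the inequality $(n+1)s \le (s+1)(n-d-l)$ is equivalent to $s\bigl(n+1 - (n-d-l)\bigr) \le n-d-l$, i.e. $s(d+l+1) \le n - d - l$, i.e. $s \le \frac{n-d-l}{d+l+1}$. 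But this is precisely the hypothesis $s \le \left\lfloor \frac{n-d-l}{d+l+1}\right\rfloor \le \frac{n-d-l}{d+l+1}$. So part (3) holds, and in fact the choice of the upper bound $\left\lfloor \frac{n-d-l}{d+l+1}\right\rfloor$ on $s$ is exactly what makes the left inequality of (3) true — the condition "$\delta \le (m-n)/|B|$"-type bound needed so that blocks$_{s+1}$ of an $(n-d-l)$-subset of $[m]$ behaves well.

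\medskip

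\emph{Where the work is.} None of these steps is a genuine obstacle; the lemma is a bookkeeping statement whose only content is that the definition $m=(n+1)s+n$ together with the range of $s$ is self-consistent with the three quantitative hypotheses recurring throughout Lemmas~2.1–2.5 when the ambient circle $[n]$ is replaced by $[m]$ and the relevant set sizes are $n$ and $n-(d+l)$. The only mild subtlety is in part (3): one must recognize that the inequality $(n+1)s \le (s+1)(n-d-l)$ rearranges exactly to $s \le \frac{n-d-l}{d+l+1}$, so that the (a priori arbitrary-looking) upper bound imposed on $s$ in the setup is forced by, and equivalent to, the left half of (3). I would present part (3) first in the "equivalent form" $s(d+l+1)\le n-d-l$ and then note it is immediate from the hypothesis and from $s\ge 1$ respectively for the two halves.
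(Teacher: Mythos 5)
Your proof is correct and follows essentially the same route as the paper: parts (1) and (2) are one-line checks from the definition $m=(n+1)s+n$ and $s\ge 1$, and for part (3) you rearrange the left inequality to the equivalent form $s(d+l+1)\le n-d-l$ exactly as the paper does, while the right inequality $n-(d+l)<(n+1)s=m-n$ follows immediately from $s\ge 1$. The only difference is that you spell out the ``trivial'' parts (1) and (2) that the paper simply asserts.
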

\begin{proof}
(1) and (2) follow trivially from the definition of $m$.

By the definition of $m$, we have $m-n=(n+1)s >n >n-(d+l)$.

The left inequality in (3) is equivalent to $m-n=(n+1)s \le (s+1)n- (d+l)(s+1)$ or $(1+d+l)s \le n-d-l$. The latter one follows from $s \le \left \lfloor \frac{n-d-l}{d+l+1}\right\rfloor$.
\end{proof}

Considering the circular representation of $[m]$ and applying Lemma $2.1-2.4$ with $\delta=s+1$, we have the following corollary.

\begin{cor} For each $n$-set $A \subseteq [m]$, $|f_{s+1}(A)|=n+s$. All the intervals in the set

$$\mathscr{I}_{m,n, s+1}=\{[A, f_{s+1}(A)] \big\vert A \subseteq [m], |A|=n\}$$

\noindent are disjoint. Moreover, if $D_l$ is a $(n+l)$-set that is not covered by any element of $\mathscr{I}_{m,n, s+1}$, then there is no superset of $D_l$ that is covered by an element of $\mathscr{I}_{m,n, s+1}$.
\end{cor}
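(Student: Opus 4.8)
The plan is to obtain Corollary 3.3 as a direct consequence of the preliminary results from Section 2, applied not to the original index set but to the enlarged index set $[m]$. The point of Lemma 3.1 is precisely to verify the arithmetic hypotheses needed to legitimately invoke Lemmas 2.1 through 2.4 with the parameter assignment $\delta = s+1$, the ambient set $[m]$, and the set size $n$ playing the role that the $d$-set played before. So the proof is essentially a bookkeeping argument: translate each of the three numbered conclusions into the corresponding earlier lemma, and check that Lemma 3.1 supplies the hypothesis each one requires.

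First I would address the claim $|f_{s+1}(A)| = n + s$ for each $n$-set $A \subseteq [m]$. This is Lemma 2.3 with the substitution $d \rightsquigarrow n$, $k \rightsquigarrow s$, and therefore $n \rightsquigarrow m$, since $m = (n+1)s + n$ matches the shape $n = (d+1)k + d$ required there. To apply Lemma 2.3 one first needs Lemma 2.1 to guarantee that the block structure of $A$ with respect to $\delta = s+1$ on $[m]$ exists and is unique; the hypothesis there is $1 \le \delta \le (m-1)/|A|$, i.e. $1 \le s+1 \le (m-1)/n$, which is exactly part (2) of Lemma 3.1. Next, the disjointness of all intervals in $\mathscr{I}_{m,n,s+1}$ follows from Lemma 2.2 applied with $\delta = s+1$: its hypothesis is $|f_\delta(A)| - |A| \le \delta - 1$, and here $|f_{s+1}(A)| - |A| = (n+s) - n = s = (s+1) - 1$, so the inequality holds with equality. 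Finally, the ``no superset'' statement about a $(n+l)$-set $D_l$ not covered by any element of $\mathscr{I}_{m,n,s+1}$ is Lemma 2.4 (Lemma 3.5 of [13]) with the same substitutions $d \rightsquigarrow n$, $k \rightsquigarrow s$; one should double-check that the roles of the auxiliary indices match, in particular that $l$ here is an honest positive integer index in the same sense as in Lemma 2.4, and that the ambient parameter $m = (n+1)s + n$ is the one used in that lemma's statement.

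The only genuine content is confirming that Lemma 3.1 really does certify every hypothesis: part (1), $m > n$, ensures $[m]$ is a strict enlargement so that the construction is non-trivial; part (2) is the existence/uniqueness hypothesis of Lemma 2.1; and part (3), $\frac{m-n}{s+1} \le n - (d+l) < m - n$, is the condition under which $f_{s+1}$ is defined on all the relevant larger sets $D_l$ of size $n + l$ (note $n+l \le n + (k-1) < \cdots$, and one checks $|D_l| \le (m-1)/(s+1)$ using the left inequality of part (3)), which is what makes the superset statement meaningful. I expect the main — though still modest — obstacle to be the third point: verifying that the size bound in part (3) of Lemma 3.1 is exactly what is needed so that Lemma 2.4 applies to every $(n+l)$-set with $0 \le l < k$, rather than only to $n$-sets, and that the inequality $\frac{m-n}{s+1} \le n-(d+l)$ translates correctly into the block-structure existence condition for those larger sets. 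Once that alignment is pinned down, the corollary follows with no further computation.
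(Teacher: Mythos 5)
Your proof is correct and follows exactly the paper's route: the corollary is obtained by applying Lemmas 2.1--2.4 on the circle $[m]$ with $\delta = s+1$, the substitution $m = (n+1)s+n$ putting it in the shape $n=(d+1)k+d$, and Lemma 3.1(2) supplying the hypothesis $1 \le s+1 \le (m-1)/n$ of Lemma 2.1. One small correction to the speculative part of your write-up: Lemma 3.1(3) plays no role in this corollary. You guess it governs whether $f_{s+1}$ is defined on the $(n+l)$-sets $D_l$, but Lemma 2.4 never applies $f_{s+1}$ to $D_l$ at all --- it only applies $f_{s+1}$ to the $n$-sets $A$ that index the intervals --- so no size bound on $D_l$ beyond being a subset of $[m]$ is needed (and indeed for $l\ge 1$ the inequality $(s+1)(n+l)\le m-1$ fails). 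Lemma 3.1(3) is used afterward, in the proof of Proposition 3.3, to ensure the tail $\{n+1,\dots,m\}$ of $\widetilde A$ lies in a single block.
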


Denote $F_{n,d+l}=\{A \subset [n] \big\vert |A|=d+l\}$ for $0 \le l <k$. We have that

\begin{prop} For any $A \in F_{n, d+l}$, let $\widetilde{A}=A \cup \{n+1,\cdots, n+n-(d+l)\} \subseteq [m]$. Then $|f_{s+1}(\widetilde{A}) \cap [n]|=d+l+s$ and the intervals in the set
$$\mathscr{I}_{n,d+l, s+1}=\{[A, f_{s+1}(\widetilde{A}) \cap [n] \hskip .1 cm ] \hskip .1 cm \big\vert \hskip .1 cm A \in F_{n, d+l}\}$$
\noindent are disjoint. Moreover, if $D_{l'} \subset [n]$ is a $(d+l+l')$-set that is not covered by any element of $\mathscr{I}_{n,d+l,s+1}$, then there is no superset of $D_{l'}$ that is covered by an element of $\mathscr{I}_{n,d+l,s+1}$.
\end{prop}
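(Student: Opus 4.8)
The plan is to deduce everything from Corollary 3.3 applied in $[m]$, by observing that the extended set $\widetilde{A} = A \cup \{n+1,\dots, n+n-(d+l)\}$ is exactly an $n$-set in $[m]$, so the block structure machinery with $\delta = s+1$ applies verbatim. First I would verify that $|\widetilde A| = (d+l) + (n-(d+l)) = n$, so $\widetilde A \in \{A' \subseteq [m] : |A'| = n\}$ and Corollary 3.3 gives $|f_{s+1}(\widetilde A)| = n+s$ together with the pairwise disjointness of the intervals $[\widetilde A, f_{s+1}(\widetilde A)]$ in $\mathscr I_{m,n,s+1}$. The key geometric point to nail down is the location of the appended points $n+1,\dots,m$ in the circular representation of $[m]$: since $m = (n+1)s + n$, the "new" points are the block $[n+1,m]$ of length $m-n = (n+1)s$, and all of $\widetilde A \setminus A$ lies in this block. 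I would then argue that when one computes the block structure of $\widetilde A$ with respect to $\delta = s+1$, the gaps that fall inside $[1,n]$ are precisely $\mathscr G_{s+1}(\widetilde A) \cap [n]$, and hence $f_{s+1}(\widetilde A) \cap [n] = A \cup (\mathscr G_{s+1}(\widetilde A)\cap[n])$; combined with $|f_{s+1}(\widetilde A)| = n+s$ and a count of how much of $f_{s+1}(\widetilde A)$ lands in $[n+1,m]$, this yields $|f_{s+1}(\widetilde A)\cap[n]| = d+l+s$.

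The disjointness of the intervals in $\mathscr I_{n,d+l,s+1}$ then follows by restriction: if $A \ne A'$ in $F_{n,d+l}$ then $\widetilde A \ne \widetilde{A'}$ (they differ already inside $[n]$), so by Corollary 3.3 the intervals $[\widetilde A, f_{s+1}(\widetilde A)]$ and $[\widetilde{A'}, f_{s+1}(\widetilde{A'})]$ are disjoint as subsets of the Boolean algebra on $[m]$; intersecting both endpoints with $[n]$ cannot create an overlap, because a set $C \subseteq [n]$ lying in both restricted intervals would, after adjoining the common tail $\{n+1,\dots,n+n-(d+l)\}$ which is contained in both $\widetilde A$ and $\widetilde{A'}$ and in both $f_{s+1}(\widetilde A)$ and $f_{s+1}(\widetilde{A'})$, give a common element of the two unrestricted intervals. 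Here I would need the small observation that $\{n+1,\dots,n+n-(d+l)\} \subseteq f_{s+1}(\widetilde A)$ for every $A$, i.e. none of those appended points becomes a gap point; this is immediate since they lie in $\widetilde A$ itself and $A' \subseteq f_\delta(A')$ always.

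For the last assertion I would mirror the statement of Corollary 3.3 (itself the lift of Lemma 2.4): suppose $D_{l'} \subseteq [n]$ with $|D_{l'}| = d+l+l'$ is not covered by any interval in $\mathscr I_{n,d+l,s+1}$. Set $\widetilde{D_{l'}} = D_{l'} \cup \{n+1,\dots, n+n-(d+l)\}$, an $(n+l')$-set in $[m]$. I would first check that $\widetilde{D_{l'}}$ is not covered by any element of $\mathscr I_{m,n,s+1}$: if it were covered by $[\widetilde A, f_{s+1}(\widetilde A)]$ for some $A \in F_{n,d+l}$ (note every $n$-set in $[m]$ containing the tail is of the form $\widetilde A$ — one must check that an $n$-set $A'$ with $A' \supseteq \{n+1,\dots,m\}$-tail and $D_{l'}\subseteq A'$ forces $A' = \widetilde A$ with $A = A' \cap [n]$, which holds because $|A' \cap [n]| = n - (n-(d+l)) = d+l$), then intersecting with $[n]$ would show $D_{l'}$ is covered in $\mathscr I_{n,d+l,s+1}$, a contradiction. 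Then by the "moreover" clause of Corollary 3.3, no superset of $\widetilde{D_{l'}}$ in $[m]$ is covered by $\mathscr I_{m,n,s+1}$; finally, any superset $E \subseteq [n]$ of $D_{l'}$ covered by some $[A, f_{s+1}(\widetilde A)\cap[n]]$ in $\mathscr I_{n,d+l,s+1}$ would give the superset $E \cup \{n+1,\dots,n+n-(d+l)\}$ of $\widetilde{D_{l'}}$ covered by $[\widetilde A, f_{s+1}(\widetilde A)]$, a contradiction. I expect the main obstacle to be the bookkeeping in the second-to-last step: making precise that every relevant $n$-set of $[m]$ containing the fixed tail restricts correctly to an element of $F_{n,d+l}$, and that covering is genuinely preserved under the restriction-and-extension operations on both the lower and upper endpoints. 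The density-range hypotheses needed to invoke Lemmas 2.1–2.4 are exactly conditions (1)–(3) of Lemma 3.1, so no new inequalities arise.
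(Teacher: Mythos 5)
Your treatment of the first two claims essentially matches the paper's. For the cardinality $|f_{s+1}(\widetilde A)\cap[n]|=d+l+s$, both you and the authors deduce from Lemma 3.1(3) that the appended run $\{n+1,\dots,m\}$ sits inside a single block of $\widetilde A$, so none of it contributes to $\mathscr G_{s+1}(\widetilde A)$, and then count. For disjointness, both arguments are the same lift-to-$[m]$ contradiction using the fact that the common tail sits inside both left and right endpoints, reducing to the pairwise disjointness in Corollary 3.2. These parts are fine.

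The ``moreover'' clause is where your proposal departs from the paper, and it has a genuine gap. You want to set $\widetilde{D_{l'}}=D_{l'}\cup\{n+1,\dots,n+n-(d+l)\}$, argue that $\widetilde{D_{l'}}$ is not covered by any element of $\mathscr I_{m,n,s+1}$, and then invoke the ``moreover'' clause of Corollary 3.2 in $[m]$. But to conclude that $\widetilde{D_{l'}}$ is uncovered in $[m]$, you only rule out intervals of the form $[\widetilde A, f_{s+1}(\widetilde A)]$ with $A\in F_{n,d+l}$. The family $\mathscr I_{m,n,s+1}$ is indexed by \emph{all} $n$-sets $A'\subseteq[m]$, and a priori $\widetilde{D_{l'}}$ could be covered by some $[A',f_{s+1}(A')]$ with $A'\subseteq\widetilde{D_{l'}}$ that does \emph{not} contain the whole tail (it may drop some tail points and keep some extra points of $D_{l'}$ instead; since $|\widetilde{D_{l'}}|=n+l'$ there are $\binom{n+l'}{n}$ candidate left endpoints). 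Your parenthetical (``every $n$-set in $[m]$ containing the tail is of the form $\widetilde A$'') presupposes exactly what needs to be shown — that the covering $A'$ contains the tail — and also miswrites the relevant inclusion as $D_{l'}\subseteq A'$ when what one has is $A'\subseteq\widetilde{D_{l'}}$. Without an argument that the unique $\mathscr I_{m,n,s+1}$-interval through $\widetilde{D_{l'}}$ (if any) must have a left endpoint of the form $\widetilde A$, the contrapositive you rely on (``$D_{l'}$ uncovered $\Rightarrow$ $\widetilde{D_{l'}}$ uncovered'') is unjustified, so Corollary 3.2's moreover clause cannot be invoked.

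The paper sidesteps this entirely: rather than lifting the ``not covered'' hypothesis to $[m]$, it re-runs the optimal-pair exchange argument of Lemma 3.5 of [13] directly on the family $\mathscr I_{n,d+l,s+1}$, carefully tracking the block of $\widetilde{A^{(i)}}$ containing $\{n+1,\dots,m\}$ so that the exchanged points $x_i,z_i$ all stay in $[n]$ and the counting $|X^{(0)}|=s-l'<s$ forces the desired $q$ to exist. If you want to repair your version, you would need a standalone lemma that any $n$-set $A'\subseteq\widetilde{D_{l'}}$ with $\widetilde{D_{l'}}\subseteq f_{s+1}(A')$ necessarily contains $\{n+1,\dots,n+n-(d+l)\}$; that requires a block-structure analysis of comparable length to the exchange argument, so the paper's route is not obviously longer.

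Finally, a small point on the bound assumptions: you need $|A'|\cdot(s+1)\le m-1$ to even form $f_{s+1}(A')$ for an $n$-set $A'$, which is Lemma 3.1(2); your remark that ``the density-range hypotheses needed... are exactly conditions (1)--(3) of Lemma 3.1'' is correct, but (3) is also what makes $\{n+1,\dots,m\}$ fit in a single block for $\widetilde A$, and it should be cited explicitly in the cardinality step rather than implicitly in the phrase ``the block structure machinery applies verbatim.''
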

\begin{proof}
For any $A \in F_{n,d+l}$, the set $\widetilde{A}=A \cup \{n+1,\cdots, n+n-(d+l)\}$ is an $n$-set in $[m]$. Because the block structure of $\widetilde{A}$ with respect to density $s+1$ exists and is unique by Lemma $2.1$ and Lemma $3.1(2)$, it is not hard to see that the consecutive integers $n+1, \cdots, m$ must lie in one block by Lemma $3.1 (3)$. So none of those numbers appears in $f_{s+1} (\widetilde{A}) \setminus \widetilde{A}$. This fact, together with $|f_{s+1} (\widetilde{A})|=n+s$, implies that $|f_{s+1} (\widetilde{A}) \cap [n]|=n+s-[n-(d+l)]=d+l+s$.

If the intersection of two intervals $[A, f_{s+1}(\widetilde{A}) \cap [n]]$ and $[A', f_{s+1}(\widetilde{A}') \cap [n]]$ in $\mathscr{I}_{n,d+l,s+1}$ is nonempty, then there exists a subset $B \in [n]$ such that $A \subseteq B \subseteq f_{s+1}(\widetilde{A}) \cap [n]$ and $A' \subseteq B \subseteq f_{s+1}(\widetilde{A}') \cap [n]$. Because $f_{s+1}(\widetilde{A})$ and $f_{s+1}(\widetilde{A}')$ both contains $\{n+1,\cdots, n+n-(d+l)\}$, so $\widetilde{B}=B \cup \{n+1,\cdots, n+n-(d+l)\}$ is a common element of $[\widetilde{A}, f_{s+1}(\widetilde{A})]$ and $[\widetilde{A}', f_{s+1}(\widetilde{A}')]$, a contradiction! Therefore the intervals in the set
$\mathscr{I}_{n,d+l, s+1}=\{[A, f_{s+1}(\widetilde{A}) \cap [n]] \big\vert A \in F_{n, d+l}\}$ are disjoint.

Suppose that $D_{l'} \subset [n]$ is a $(d+l+l')$-set that is not covered by any element of $\mathscr{I}_{n,d+l,s+1}$. We prove that there is no superset of $D_{l'}$ that is covered by an element of $\mathscr{I}_{n,d+l,s+1}$. Otherwise, there is a superset of $D_{l'}$, say $D$, is covered by $[A, f_{s+1}(\widetilde{A}) \cap [n]]$ for some $A \in F_{n, d+l}$. Because $f_{s+1}(\widetilde{A}) \cap [n]$ is also a superset of $D_{l'}$ covered by an interval in $\mathscr{I}_{n,d+l, s+1}$, it is sufficient to assume that $D=f_{s+1}(\widetilde{A}) \cap [n]$. We follow the proof of Lemma $3.5$ in $[13]$ to get a contradiction.

For any $[A, f_{s+1}(\widetilde{A}) \cap [n]] \in \mathscr{I}_{n,d+l, s+1}$ with $D_{l'} \subseteq f_{s+1}(\widetilde{A}) \cap [n]$, let $X=(f_{s+1}(\widetilde{A}) \cap [n]) \setminus D_{l'}=f_{s+1}(\widetilde{A}) \setminus \widetilde {D_{l'}}$. We call such a combination of sets $(X,A)$ a $\textit{pair}$. We call the pair $(X,A)$ $\textit{optimal}$ if, among all pairs, $|X \cap A|$ is minimized.

Let $(X^{(0)},A^{(0)})$ be an optimal pair. Notice that if $X^{(0)} \cap A^{(0)} =\emptyset$, then $A^{(0)} \subseteq f_{s+1}(\widetilde{A^{(0)}}) \cap [n] \subseteq X^{(0)} \cup D_{l'}$ implies $A^{(0)} \subseteq D_{l'}$. So $D_{l'}$ is covered by $[A^{(0)}, f_{s+1}(\widetilde{A^{(0)}}) \cap [n]]$, a contradiction. Thus $|X^{(0)} \cap A^{(0)}| \ge 1$. Consider $x_0 \in X^{(0)} \cap A^{(0)}$. Let $B \in$ blocks$_{(s+1)}(\widetilde{A^{(0)}})$ such that $x_0 \in B$. Let $x_1$ be the first element in $\mathscr{G}_{s+1} (\widetilde{A^{(0)}})$ counterclockwise from $B$ in the circular representation of $[m]$, and let $z_0$ be the last element of $B \setminus \{n+1, \cdots, m\}$ (that is, the most clockwise element of $B$ which is not in the set $\{n+1, \cdots, m\}$). Because the set $\{n+1, \cdots, m\}$ lie in a single block in blocks$_{(s+1)}(\widetilde{A^{(0)}})$, $\{x_1, z_0\} \subseteq [n]$. The point $z_0$ exists and is distinct from $x_0$ since the density $(s+1) \ge 2$ and a block cannot end with an element of $A^{(0)}$. Let $x_2, \cdots, x_p$ be the successive elements of the gaps of $\mathscr{G}_{s+1} (\widetilde{A^{(0)}})$, indexed counterclockwise from $x_1$. Fix $q$ as small as possible so that $x_{q+1} \in D_{l'}$. Such a $q$ must exist, as otherwise all the gap points belong to
$X^{(0)}$, so we would have $|X^{(0)}| \ge s$, but $|X^{(0)}|=d+l+s-(d+l+l')=s-l'<s$.

We now define a sequence of pairs $(X^{(i)},A^{(i)})$. For every $i$, $0 \le i \le q$, let $A^{(i+1)}=\left (A^{(i)} \setminus \{x_i\}\right ) \cup \{x_{i+1}\}$. Notice that $f_{s+1}(\widetilde{A^{(i+1)}}) \cap [n]=\left ((f_{s+1} (\widetilde{A^{(i)}}) \cap [n]) \setminus \{x_i\}\right ) \cup \{z_i\}$, where $z_i$ is the last element in $B_i \setminus \{n+1, \cdots, m\}$ and $B_i$ is the block in blocks$_{(s+1)}(\widetilde{A^{(i)}})$ that contains $x_i$. We define $X^{(i+1)}=\left (f_{s+1}(\widetilde{A^{(i+1)}}) \cap [n] \right ) \setminus D_{l'}=(X^{(i)} \setminus \{x_i\}) \cup \{z_i\}$. Now we have the following property from our definition

$|X^{(i+1)} \cap A^{(i+1)}|=|\left((X^{(i)} \setminus \{x_i\}) \cup \{z_i\} \right ) \cap \left((A^{(i)} \setminus \{x_i\}) \cup \{x_{i+1}\} \right) |$

\hskip 2.98 cm $=|\left((X^{(i)} \cap A^{(i)}) \setminus \{x_i\} \right) \cup \left (\{z_i\} \cap A^{(i)} \right) \cup \left(X^{(i)} \cap \{x_{i+1}\} \right)|$

\hskip 2.98 cm $=|A^{(i)} \cap X^{(i)}|-1+0+|X^{(i)} \cap \{x_{i+1}\}|$

For $0 \le i<q$, we know that $x_{i+1} \notin D_{l'}$, and hence $x_{i+1} \in X^{(i)}$. From the computation above, we see that this implies $|X^{(i+1)} \cap A^{(i+1)}|=|X^{(i)} \cap A^{(i)}|$. Thus, the pair $(X^{(i+1)}, A^{(i+1)})$ is optimal for $i<q$. On the other hand, $x_{q+1} \in D_{l'}$, and then $x_{q+1} \notin X^{(i)}$. So we have $|X^{(q+1)} \cap A^{(q+1)}|<|X^{(0)} \cap A^{(0)}|$, contrary to the optimality of $(X^{(0)}, A^{(0)})$. This completes the proof.
\end{proof}

The following disjointness result is a generalization of Lemma $3.1$ in $[13]$. Although we do not use it directly in this paper, it gives us some ideas on how to compare intervals with different densities.

\begin{prop}
Given a positive integer $n$, let $A, B \subseteq [n]$ with $|A| \le |B|$, and let $\delta, \eta \in \R$ with $\delta \ge \eta \ge 1$, $\delta \cdot |A| \le n-1$ and $\eta \cdot |B|  \le n-1$. If $|f_{\eta} (B)|-|B| \le \eta -1$ and $A \nsubseteq B$, then $[A, f_{\delta}(A)]$ does not intersect $[B, f_{\eta}(B)]$.
\end{prop}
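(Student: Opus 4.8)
The plan is to mimic the proof of Lemma 2.3 (Lemma 3.1 in [13]), but to deal with the fact that the two intervals now have \emph{different} densities $\delta \ge \eta$. Suppose for contradiction that some $C$ lies in both $[A, f_\delta(A)]$ and $[B, f_\eta(B)]$, so $A \cup B \subseteq C \subseteq f_\delta(A) \cap f_\eta(B)$. The overall strategy is to exploit the hypothesis $|f_\eta(B)| - |B| \le \eta - 1$, which forces the block structure of $B$ with respect to $\eta$ to be especially rigid: since each gap $G_i$ in $\text{gaps}_\eta(B)$ is nonempty, having at most $\eta - 1$ gap points in total and at least one gap per block means there can be at most $\eta - 1$ blocks, and moreover each block $B_i$ is ``tight'' in the sense of condition (iii), i.e. $|B_i|$ is as close as possible to $\eta \cdot |B \cap B_i|$.

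First I would use $A \subseteq f_\eta(B) = B \cup \mathscr{G}_\eta(B)$ together with $A \not\subseteq B$ to pick a point $a \in A$ with $a \in \mathscr{G}_\eta(B)$, say $a \in G_i$. Next I would examine the block $B_i$ of $\text{blocks}_\eta(B)$ immediately preceding $G_i$ (clockwise). Since $C \supseteq B$, the block $B_i$ together with the point $a$ sits inside $C \subseteq f_\delta(A)$; the key point is to count how many elements of $A$ can lie in the relevant arc. Because $A \subseteq f_\eta(B)$ and $B_i$ ends in a gap-free stretch that is ``$\eta$-dense for $B$'', any sub-arc of $C$ that starts at an element of $A$ and runs through $B_i \cup \{a\}$ contains too few points of $A$ relative to its length to satisfy condition (iv) or (iii) of the block structure of $A$ with respect to $\delta \ge \eta$ — intuitively, $A$ is ``no denser than $B$'' on that arc, yet $\delta \ge \eta$ demands it be denser. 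Turning this intuition into the inequality $|[b, y]| + 1 > \delta \cdot |[b,y] \cap A|$ for the appropriate starting point $b$ of a $\delta$-block of $A$ and endpoint $y$ inside $B_i \cup \{a\}$ contradicts the existence (condition (iv)) or the block bound (condition (iii)) of the block structure of $A$ with respect to $\delta$, since that whole arc would have to be swallowed by a single $\delta$-block of $A$ yet cannot be.

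The main obstacle I anticipate is bookkeeping the interaction between the two block structures: one must be careful about \emph{where} the chosen element $a \in A \cap \mathscr{G}_\eta(B)$ falls relative to the $\delta$-block structure of $A$, and about the boundary case where $a$ is the first element of $G_i$ versus deeper inside it. A clean way to handle this is to choose $a$ to be the \emph{first} (clockwise) element of $A$ lying in a gap of $B$; then the arc from the start $b_i$ of $B_i$ up to $a$ has the property that all its $A$-points except possibly $a$ itself lie in $B$, so $|[b_i, a] \cap A| \le |B \cap B_i| + 1$, while $|[b_i,a]| \ge |B_i| + 1 \ge \eta \cdot |B \cap B_i| - 1 + 1 = \eta|B\cap B_i|$ wait — here one uses the tightness from $|f_\eta(B)|-|B|\le \eta-1$ to pin $|B_i|$ down precisely, and then compares with what $\delta \ge \eta$ forces. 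Once the counting is set up with the right endpoints, the contradiction with conditions (iii)–(iv) for $A$ with respect to $\delta$ should drop out, exactly paralleling the $\delta = \eta$ argument of Lemma 2.3 but with the inequality slack absorbed by $\delta \ge \eta$.
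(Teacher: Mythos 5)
Your sketch heads in a genuinely different direction from the paper, and as written it does not close; I believe there is a real gap rather than mere bookkeeping.

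The paper's proof is a \emph{propagation} argument, not a density-violation argument. Starting from $a \in A \cap \mathscr{G}_\eta(B)$, it shows that the $\delta$-block $A'$ of $A$ containing $a$ must swallow the first element $b_{2_1}$ of the next $\eta$-block $B_2$ of $B$ (this is where $|\mathscr{G}_\eta(B)| \le \eta - 1 \le \delta - 1$ is used), and since $b_{2_1} \in B \subseteq A \cup \mathscr{G}_\delta(A)$ but $b_{2_1} \notin \mathscr{G}_\delta(A)$, in fact $b_{2_1} \in A$. Iterating clockwise, one absorbs every $\eta$-block of $B$ into blocks of $A$ and concludes $B \subseteq A$. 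The final contradiction is then $|A| \le |B|$ together with $B \subseteq A$ forcing $A = B$, contradicting $A \nsubseteq B$. Note that $|A| \le |B|$ is used only at this last step, and it is essential: if $|A| > |B|$ one can have $B \subsetneq A \subseteq f_\eta(B)$ and the intervals do meet. Your sketch never invokes $|A| \le |B|$, which is already a sign that the intended contradiction is the wrong one.

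The density argument you propose does not produce a violation, because the inequality points the wrong way. With $a$ chosen as you suggest, the arc $[b_i, a]$ has $|[b_i,a] \cap A| \le |B \cap B_i| + 1$ and $|[b_i,a]| > \eta\,|B \cap B_i|$. To contradict condition (iv) for the $\delta$-block structure of $A$ you would need $|[b_i,a]| + 1 > \delta\,|[b_i,a] \cap A|$, but $\eta\,|B\cap B_i| + 1 < \delta\big(|B\cap B_i| + 1\big)$ whenever $\delta > 1$ and $\eta \le \delta$, so the arc is in fact \emph{short} relative to what a $\delta$-prefix is allowed to be. Intuitively, $\delta \ge \eta$ means $A$'s blocks are permitted to be \emph{less} dense than $B$'s, so trapping $A$ inside $f_\eta(B)$ with tiny gaps is perfectly compatible with conditions (iii)--(iv) for $A$; the larger density does not "demand $A$ be denser," it demands the opposite. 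There is also a structural obstacle you do not address: to invoke (iii) or (iv) you need the arc in question to be a prefix of a single $\delta$-block of $A$, but $b_i$ need not be in $A$, let alone the first element of a $\delta$-block of $A$, so the counting you set up has no block of $A$ to which it applies. To repair the proof you should abandon the density-violation goal and instead show, step by step as in the paper, that $B \subseteq A$, then finish with $|A| \le |B|$.
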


\begin{proof}
Suppose for contradiction that $[A, f_{\delta}(A)]$ and $[B, f_{\eta}(B)]$ have a nontrivial intersection, then $A \subseteq B \cup \mathscr{G}_{\eta}(B)$ and $B \subseteq A \cup \mathscr{G}_{\delta}(A)$. Because $A \nsubseteq B$, there exists an $a \in A \setminus B$ satisfying $a \in \mathscr{G}_{\eta}(B)$. Let $B_1, G_1, B_2, \cdots, B_p, G_p$ be the block structure of $B$ with density $\eta$ and, without loss of generality, suppose $a \in G_1$. Note that

$$|\mathscr{G}_{\eta}(B)|=|f_{\eta} (B)|-|B| \le \eta -1 \le \delta -1.$$

Therefore, the block in blocks$_{\delta}(A)$ that contains $a$, say $A'$, must also contain $b_{2_1}$, the first element of $B_2$. As $b_{2_1} \in B$ and $b_{2_1} \notin \mathscr{G}_{\delta} (A)$, it must be that $b_{2_1} \in A$. Let $b_{2_2}$ be the next element of $B_2 \cap B$ found when proceeding clockwise around the circular representation of $[n]$. Clearly $b_{2_2} \in A'$, so $b_{2_2} \in A$ as otherwise $b_{2_2} \notin f_{\delta}(A)$. Proceeding clockwise in this manner, we find that $B \cap B_2 \subseteq A$ and $B_2 \subseteq A'$.

Again, using the fact that $|\mathscr{G}_{\eta}(B)| \le \eta -1 \le \delta -1$ and the fact that $a \in A'$, we find that $b_{3_1}$, the first element of $B_3$, is also in $A'$. Applying the same argument as was applied to $B_2$ we find that $B \cap B_3 \subseteq A$ and $B_3 \subseteq A'$.

Finally, proceeding clockwise and using analogous arguments, we conclude that $B=B \cap \mathscr{B}_{\eta} (B) \subseteq A$. Since $|A| \le |B|$, this implies that $A=B$, contradicting the assumption that $A \nsubseteq B$. Therefore $[A, f_{\delta}(A)]$ does not intersect $[B, f_{\eta}(B)]$.
\end{proof}

Let $C$ and $D$ be two subsets of $[n]$ with $|C|=d+q$, $|D|=d+l$ and $0 \le q \le l<k$. Let $m'=(n+1) (\eta-1) +n$ and $m''=(n+1) (\delta-1) +n$, where $\eta$ and $\delta$ are two positive integers with $\eta \le \left \lfloor \frac{n+1}{d+l+1}\right\rfloor$ and $\delta \le \left \lfloor \frac{n+1}{d+q+1}\right\rfloor$. Let $\widetilde{C}=C \cup \{n+1, \cdots, n+n-(d+q)\} \subseteq [m'']$ and  $\widetilde{D}=D \cup \{n+1, \cdots, n+n-(d+l)\} \subseteq [m']$. If $\eta \le \delta$, then $m' \le m''$. From Proposition $3.3$ and its proof we know that the consecutive integers $\{n+1, \cdots, m'\}$ must lie in a single block in blocks$_{\eta}(\widetilde{D})$ of $[m']$. We can replace this block with the union of itself and the consecutive numbers $\{m'+1, \cdots, m''\}$ and keep the remaining blocks and gaps in the block structure of $\widetilde{D}$ unchanged. The resulting block structure of $\widetilde{D}$ on $[m'']$ is called the extended block structure. We have the following proposition.

\begin{prop}
Given $C, D \subseteq [n]$ with $|C|=d+q$, $|D|=d+l$ and $0 \le q \le l<k$. Suppose that there exist two positive integers $\eta$ and $\delta$ such that $\eta \le \delta$, $\eta \le \left \lfloor \frac{n+1}{d+l+1}\right\rfloor$, $\delta \le \left \lfloor \frac{n+1}{d+q+1}\right\rfloor$ and $(d+l+1) \eta \ge (d+q+1) \delta$. Then $[C, f_{\delta}(\widetilde{C}) \cap [n]]$ does not intersect $[D, f_{\eta}(\widetilde{D}) \cap [n]]$ if $D$ is not covered by $[C, f_{\delta}(\widetilde{C}) \cap [n]]$.
\end{prop}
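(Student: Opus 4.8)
The plan is to mimic the proof of Proposition~3.3 (the mixed-density disjointness lemma), but work in the larger set $[m'']$ using the extended block structure of $\widetilde D$. Suppose for contradiction that $[C, f_{\delta}(\widetilde C)\cap[n]]$ and $[D, f_{\eta}(\widetilde D)\cap[n]]$ share a common subset $B\subseteq[n]$. Then, exactly as in Proposition~3.2, we may pass to $[m'']$: set $\widetilde B = B\cup\{n+1,\dots,n+n-(d+q)\}$ and observe that $\widetilde C\subseteq \widetilde B$; on the other side we want $\widetilde B' := B\cup\{n+1,\dots,n+n-(d+l)\}$ to lie in $[\widetilde D, f_{\eta}(\widetilde D)]$ after the block structure of $\widetilde D$ has been extended to $[m'']$. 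The first step is therefore to check carefully that extending the single block containing $\{n+1,\dots,m'\}$ to also swallow $\{m'+1,\dots,m''\}$ genuinely yields a valid block structure on $[m'']$ with density $\eta$ — conditions (i),(ii),(iv) are untouched, and (iii) still holds because enlarging both $B_i$ and nothing of $A\cap B_i$ only weakens the lower bound while the upper bound $|B_i|\le\eta\,|A\cap B_i|$ must be re-examined; this is where the hypothesis $\eta\le\lfloor(n+1)/(d+l+1)\rfloor$ is used, ensuring room for the extra $m''-m'$ points. Consequently $f_{\eta}(\widetilde D)$ computed in $[m'']$ still omits all of $\{n+1,\dots,m''\}\setminus\widetilde D$, so $f_{\eta}(\widetilde D)\cap[n]$ is unchanged, and $B\subseteq f_{\eta}(\widetilde D)\cap[n]$ forces $\widetilde B'\subseteq f_{\eta}(\widetilde D)$ inside $[m'']$.

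Next I would reduce to a statement purely about the two $n$-ish sets living in $[m'']$: we now have $\widetilde C$ and $\widetilde D$ (viewed in $[m'']$), with $|\widetilde C| = n$, $|\widetilde D| = n$ after padding — wait, more precisely $|\widetilde C|=2n-d-q$ and $|\widetilde D|=2n-d-l$, so $|\widetilde D|\le|\widetilde C|$, and there is a common set sandwiched: the padded $\widetilde B$ sits above $\widetilde C$ while a slightly larger padded set sits above $\widetilde D$. The cleanest route is to show $\widetilde D\subseteq f_{\delta}(\widetilde C)$: indeed $D\subseteq B\subseteq f_{\delta}(\widetilde C)\cap[n]$ and the padding block $\{n+1,\dots,n+n-(d+l)\}$ is contained in $\{n+1,\dots,n+n-(d+q)\}\subseteq f_{\delta}(\widetilde C)$ (since $q\le l$), so indeed $\widetilde D\subseteq f_{\delta}(\widetilde C)$, i.e. $[\widetilde C, f_{\delta}(\widetilde C)]$ and $[\widetilde D, f_{\eta}(\widetilde D)]$ intersect in $[m'']$. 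Then I would apply Proposition~3.3 with $A = \widetilde C$, $B = \widetilde D$, ambient set $[m'']$, and densities $\delta\ge\eta\ge1$: its hypotheses require $\delta|\widetilde C|\le m''-1$, $\eta|\widetilde D|\le m''-1$, and $|f_{\eta}(\widetilde D)|-|\widetilde D|\le\eta-1$. The numerical hypotheses $\eta\le\lfloor(n+1)/(d+l+1)\rfloor$ and $\delta\le\lfloor(n+1)/(d+q+1)\rfloor$ are precisely chosen so that these three conditions hold (here is where $m''=(n+1)(\delta-1)+n$ and $m'=(n+1)(\eta-1)+n$ enter, together with the gap-size bound from Lemma~2.4/Proposition~3.2 that forces $|f_{\eta}(\widetilde D)|-|\widetilde D|\le\eta-1$). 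Proposition~3.3 then yields $\widetilde C\subseteq\widetilde D$, hence $C\subseteq D$ on the $[n]$-part. But $C\subseteq D$ with $C\subseteq f_{\delta}(\widetilde C)\cap[n]$ means $D$... no: $C\subseteq D$ does not immediately contradict anything, so instead I should run the inclusion the other way.

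Let me correct the direction: since $|\widetilde D|\le|\widetilde C|$, Proposition~3.3 as stated (with $|A|\le|B|$, $A=\widetilde D$, $B=\widetilde C$? but then we'd need the gap bound on $f_{\delta}(\widetilde C)$, not $f_{\eta}(\widetilde D)$) needs care about which set plays which role; the honest version is that we want to conclude $C\subseteq D$ is impossible \emph{given that $D$ is not covered by $[C,f_\delta(\widetilde C)\cap[n]]$}. The right conclusion to extract is: the intersection of the two padded intervals in $[m'']$ forces, via a block-chasing argument identical to Proposition~3.3 (starting from an element of $\widetilde C\setminus\widetilde D$ lying in a gap of $\widetilde D$ and using $|\mathscr G_\eta(\widetilde D)|\le\eta-1\le\delta-1$ to propagate), that every block of $\widetilde D$ is swallowed by a single block of $\widetilde C$, giving $\widetilde D\subseteq f_\delta(\widetilde C)$ \emph{and} $\widetilde C\cap\mathscr B_\eta(\widetilde D)\subseteq\widetilde D$; restricting to $[n]$ this says $C\subseteq f_\eta(\widetilde D)\cap[n]$, i.e. $C\subseteq D\cup(f_\eta(\widetilde D)\setminus\widetilde D)\cap[n]$ — and since $f_\eta(\widetilde D)\cap[n]$ \emph{is} an element of the interval $[D, f_\eta(\widetilde D)\cap[n]]$, the chain $C\subseteq f_\eta(\widetilde D)\cap[n]$ together with $D\subseteq f_\eta(\widetilde D)\cap[n]$ does not yet give coverage of $D$ by $[C,\cdot]$. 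The actual contradiction with the hypothesis ``$D$ not covered by $[C, f_\delta(\widetilde C)\cap[n]]$'' comes from showing $D\subseteq f_\delta(\widetilde C)\cap[n]$, which is exactly $\widetilde D\subseteq f_\delta(\widetilde C)$ restricted to $[n]$ — and \emph{that} is what the block-chasing delivers. So the structure is: assume the two intervals meet; derive $\widetilde D\subseteq f_\delta(\widetilde C)$ by the Proposition~3.3 argument transplanted to $[m'']$ with the extended block structure; restrict to $[n]$ to get $D\subseteq f_\delta(\widetilde C)\cap[n]$; together with $C\subseteq D$ (from $C\subseteq\widetilde C\subseteq\widetilde D$) this shows $D$ \emph{is} covered by $[C, f_\delta(\widetilde C)\cap[n]]$, the desired contradiction.

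The main obstacle I anticipate is verifying that the extended block structure on $[m'']$ really is \emph{the} block structure of $\widetilde D$ with density $\eta$ in the sense of Lemma~2.1 — in particular that condition (iii), $\eta|A\cap B_i|-1<|B_i|\le\eta|A\cap B_i|$, survives for the enlarged block, which forces a tight count relating $m''-m'=(n+1)(\delta-\eta)$ to the number of $\widetilde D$-points in that block and uses the inequality $(d+l+1)\eta\ge(d+q+1)\delta$ in an essential way. If the enlarged block fails (iii) as an \emph{equality-capped} block but still satisfies it as stated, fine; but if enlarging overshoots the upper bound, the whole approach breaks and one must instead argue more delicately about where the points $\{m'+1,\dots,m''\}$ land. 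I expect the condition $(d+l+1)\eta\ge(d+q+1)\delta$ is exactly the arithmetic that prevents this overshoot, so the crux of the write-up is a clean lemma: \emph{under the stated inequalities, the single $\widetilde D$-block containing $\{n+1,\dots,m'\}$, once enlarged by $\{m'+1,\dots,m''\}$, satisfies all four block-structure axioms with density $\eta$ on $[m'']$}. Once that is in hand, everything else is a faithful rerun of the proof of Proposition~3.3 (mixed densities) combined with the padding-and-restriction bookkeeping already established in the proof of Proposition~3.2.
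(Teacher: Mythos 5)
Your overall plan — pass to $[m'']$, use the extended block structure of $\widetilde D$, and rerun the mixed-density block-chasing argument of Proposition~3.4 — is the same as the paper's. However, the proposal contains a genuine error in the direction of the conclusion and a misdiagnosis of where the key inequality enters.

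First, the direction. You close with ``together with $C\subseteq D$ (from $C\subseteq\widetilde C\subseteq\widetilde D$) this shows $D$ is covered.'' But $\widetilde C\subseteq\widetilde D$ is false in general: $\widetilde C$ carries \emph{more} padding than $\widetilde D$ (since $q\le l$), and there is no reason $C\subseteq D$. The block-chasing, run correctly, gives the opposite inclusion: starting from $c\in C$ lying in a gap of $\widetilde D$ (such a $c$ exists precisely because $C\nsubseteq D$, which itself follows from the nontrivial intersection plus the hypothesis that $D$ is not covered), one propagates around the circle and concludes that $\widetilde D\subseteq\widetilde C$, hence $D\subseteq C$. Combined with $|C|\le|D|$ this forces $C=D$, contradicting $C\nsubseteq D$. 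Your intermediate candidate $\widetilde C\cap\mathscr B_\eta(\widetilde D)\subseteq\widetilde D$ is also not what the chasing delivers; the proof shows $\widetilde D\cap B_i\subseteq\widetilde C$ for each block $B_i$ of $\widetilde D$, not the reverse containment. (Also note that $D\subseteq f_\delta(\widetilde C)\cap[n]$ needs no block-chasing at all: it is automatic from the nontrivial intersection, since $C\cup D$ lies in $[C,f_\delta(\widetilde C)\cap[n]]$.)

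Second, the role of $(d+l+1)\eta\ge(d+q+1)\delta$. You treat it as the arithmetic that makes the enlarged block satisfy axiom~(iii) on $[m'']$. But the extended block structure is only a bookkeeping device; the argument never needs the enlarged block to satisfy~(iii), only that the gaps of $\widetilde D$ on $[m']$ persist unchanged on $[m'']$ and $|\mathscr G_\eta(\widetilde D)|\le\eta-1\le\delta-1$. The inequality is instead used at a different, specific moment of the chase: when the propagation crosses the big block $B_w$ containing all of $\{n+1,\dots,m''\}$, one must verify that the block $C'$ of $\widetilde C$ reaches far enough clockwise to catch $d_{w_1}$, the first element of $D\cap B_w$ beyond the padding. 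This is exactly where $[n-(d+q)]\delta\ge[n-(d+l)]\eta+(m''-m')$ — equivalent to $(d+l+1)\eta\ge(d+q+1)\delta$ — is invoked. Your write-up would need to be reorganized around these two corrections before it matches a valid proof.
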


\begin{proof} 
Suppose that $[C, f_{\delta}(\widetilde{C}) \cap [n]]$ and $[D, f_{\eta}(\widetilde{D}) \cap [n]]$ have a nontrivial intersection, then $(C \cup D) \in [C, f_{\delta}(\widetilde{C}) \cap [n]] \cap [D, f_{\eta}(\widetilde{D}) \cap [n]]$.
If $D$ is not covered by $[C, f_{\delta}(\widetilde{C}) \cap [n]]$, then $C \nsubseteq D$. So there exists a $c \in C$ such that $c \in \mathscr{G}_{\eta}(\widetilde{D})$. 

Let $B_1, G_1, B_2, \cdots, B_p, G_p$ be the extended block structure of $\widetilde{D}$ on $[m'']$ and, without loss of generality, suppose $c \in G_1$. Note that the gaps$_{\eta}(\widetilde{D})$ on $[m']$ keeps unchanged in the extended block structure on $[m'']$ and

$$|\mathscr{G}_{\eta}(\widetilde{D})|=|f_{\eta} (\widetilde{D})|-|\widetilde{D}| \le \eta-1 \le \delta -1.$$

Denote $C'$ the block in blocks$_\delta (\widetilde{C})$ that contains $c$. Let $B_w$ be the block in 
the extended block structure of $\widetilde{D}$ on $[m'']$ that contains $\{n+1, \cdots, m''\}$. Repeating the same argument as that in the proof of Proposition $3.4$, we can show that $D \cap B_v \subseteq C$ and $B_v \subseteq C'$ for any $1<v<w$. For the block $B_w$, it is easy to see that $D \cap B_w \cap [n] \subseteq C$, $(\widetilde{D} \cap B_w) \cap [c, m''] \subseteq \widetilde{C}$ and $B_w \cap [c, m''] \subseteq C'$. Proceeding clockwise, suppose that $d_{w_1}$ is the first element in $D_w \cap D$ but not in $[c, m'']$. Then the inequality $[n-(d+q)] \delta \ge [n-(d+l)] \eta +(m''-m')$, which is equivalent to $(d+l+1) \eta \ge (d+q+1) \delta$, guarantees that $d_{w_1} \in C'$. Because $D \subseteq f_{\delta}(\widetilde{C}) \cap [n]=C \cup \mathscr{G}_{\delta}(\widetilde{C})$, $d_{w_1} \in C$. Continue this process, we can show that $D \cap B_w \subseteq C$ and $B_w \subseteq C'$.

Proceeding clockwise and using analogous arguments, we conclude that $D \cap \mathscr{B}_{\eta} (\widetilde{D}) \subseteq C$. However, $D \cap \mathscr{B}_{\eta} (\widetilde{D})=D$, so $D \cap \mathscr{B}_{\eta} (\widetilde{D}) \subseteq C$ and $|C| \le |D|$ imply that $C=D$, contradicting the fact that $C \nsubseteq D$. Therefore $[C, f_{\delta}(\widetilde{C}) \cap [n]]$ does not intersect $[D, f_{\eta}(\widetilde{D}) \cap [n]]$.
\end{proof}

Now we are ready to prove Theorem $1.2$. 

\section{Proof of Theorem 1.2}

Because Cimpoea\c{s} $[7]$ and Keller, Shen, Streib and Young $[13]$ have obtained an upper bound $\sdepth(I_{n,d}) \le \left \lfloor \binom{n}{d+1}/\binom{n}{d} \right\rfloor+d$, to prove Theorem $1.2 (1)$, it is sufficient to show that $\sdepth(I_{n,d}) \ge \left \lfloor \binom{n}{d+1}/\binom{n}{d} \right\rfloor+d$ if $1 \le d \le n \le (d+1) \left \lfloor \frac{1+\sqrt{5+4d}}{2}\right\rfloor+2d$.

For any $n \ge d$, $\sdepth(I_{n,d}) \ge d$ follows by taking trivial interval partitions of $P_{I_{n,d}}$. So for $d \le n \le 2d$, we have $\sdepth(I_{n,d}) \ge d=\left \lfloor \frac{n-d}{d+1}\right\rfloor+d= \left \lfloor \binom{n}{d+1}/\binom{n}{d} \right\rfloor+d$.

Throughout, we assume that $n \ge 2d+1$. Any such an $n$ can be written uniquely as $n=(d+1)k+d+r$ with $0 \le r \le d$ and $k \ge 1$.

When $k=1$, $l$ $(0 \le l<k)$ can only take a single value $0$. In this case we can take $s=1$ and $m=(n+1) \cdot 1+n$ to construct $\mathscr{I}_{n,d,2}$. An interval partition $\mathscr{P}$ of $P_{I_{n,d}}$ can be constructed as follows. First we include all the intervals in $\mathscr{I}_{n,d, 2}$ into $\mathscr{P}$. By Proposition $3.3$, the intervals in $\mathscr{I}_{n,d,2}$ are disjoint and their right end points have cardinality $d+1=d+k$. The remaining uncovered subsets of $P_{I_{n,d}}$ can be covered by trivial intervals because all of them have cardinality at least $d+1=d+k$.
This proves Theorem $1.2(1)$ for $2d+1 \le n \le 3d+1$.

When $k=2$, $l$ can take two values $0$ and $1$. Take $s=k-l$ and $m=(n+1)s+n$ in each case, we can construct $\mathscr{I}_{n,d,3}$ and $\mathscr{I}_{n,d+1,2}$ respectively. We then construct an interval partition $\mathscr{P}$ of $P_{I_{n,d}}$ as follows. First we include all the intervals in $\mathscr{I}_{n,d, 3}$ into $\mathscr{P}$. If the left endpoint of an interval in $\mathscr{I}_{n,d+1,2}$ is not covered by any element in $\mathscr{I}_{n,d, 3}$, then add it into $\mathscr{P}$; otherwise discard it. By Proposition $3.3$, the selected intervals in $\mathscr{P}$ are disjoint and their right end points have cardinality $d+2=d+k$. The remaining uncovered subsets of $P_{I_{n,d}}$ can be covered by trivial intervals because all of them have cardinality at least $d+2=d+k$. This proves Theorem $1.2(1)$ for $3d+2 \le n \le 4d+2$.

If $\left \lfloor \frac{1+\sqrt{5+4d}}{2}\right\rfloor \le 2$, Theorem $1.2(1)$ follows. So we may assume $\left \lfloor \frac{1+\sqrt{5+4d}}{2}\right\rfloor \ge 3$.

For any $n$ with $4d+3=(d+1)3+d \le n \le (d+1) \left \lfloor \frac{1+\sqrt{5+4d}}{2}\right\rfloor+2d$ , we can write $n$ as $n=(d+1)k+d+r$ with $3 \le k \le \left \lfloor \frac{1+\sqrt{5+4d}}{2}\right\rfloor$ and $0 \le r \le d$. It is easy to show that $\frac{n+1}{d+l+1} \ge k+1$ when $l=0$ and $\frac{n+1}{d+l+1} \ge k$ when $0 < l<k$. Taking $s=k$ when $l=0$ and $s=k-1$ when $0<l<k$ and letting $m=(n+1)s+n$, we can construct $\mathscr{I}_{n,d,k+1}$ and $\mathscr{I}_{n,d+l,k}$ for each $0< l<k$ respectively. Now we can construct an interval partition $\mathscr{P}$ of $P_{I_{n,d}}$ by selecting suitable intervals from $\mathscr{I}_{n,d,k+1}$ and $\mathscr{I}_{n,d+l,k}$.

We will build up $\mathscr{P}$ step by step. First we include all the intervals in $\mathscr{I}_{n,d,k+1}$ into $\mathscr{P}$ and denote the set of those intervals $\mathscr{P}_0$. Then we proceed to $\mathscr{I}_{n,d+1,k}$ to construct $\mathscr{P}_1$. If the left endpoint of an interval in $\mathscr{I}_{n,d+1,k}$ is not covered by any element in $\mathscr{P}_0$, then add it into $\mathscr{P}_0$; otherwise discard it. The resulting set will be denoted as $\mathscr{P}_1$. Continue this process. An interval in $\mathscr{I}_{n,d+l,k}$ will be added into $\mathscr{P}_{l-1}$ if its left endpoint is not covered by any element in $\mathscr{P}_{l-1}$. And the resulting set will be denoted as $\mathscr{P}_l$. After this selection process reaches its end at $l=k-1$, adding the remaining uncovered subsets of $P_{I_{n,d}}$ as trivial intervals into $\mathscr{P}_{k-1}$ gives us $\mathscr{P}$. We have the following theorem.

\begin{thm} For any $n$ and $d$ with $(d+1)3+d \le n \le (d+1) \left \lfloor \frac{1+\sqrt{5+4d}}{2}\right\rfloor+2d$, the Stanley depth $\sdepth(I_{n,d})=\left \lfloor \binom{n}{d+1}/\binom{n}{d} \right\rfloor+d$.
\end{thm}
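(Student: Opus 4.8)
The plan is to show the lower bound $\sdepth(I_{n,d}) \ge \lfloor \binom{n}{d+1}/\binom{n}{d}\rfloor + d$, since the matching upper bound is already available from $[7,13]$. Writing $n = (d+1)k + d + r$ with $0 \le r \le d$ and $k \ge 1$, one checks by a short computation that $\lfloor \binom{n}{d+1}/\binom{n}{d}\rfloor = \lfloor (n-d)/(d+1)\rfloor = k$, so the goal becomes: construct an interval partition $\mathscr{P}$ of $P_{I_{n,d}}$ in which every interval has right endpoint of cardinality at least $d+k$. The cases $k=1$ and $k=2$ (i.e. $2d+1 \le n \le 4d+2$) are handled by the explicit constructions already given in the text, so I would assume $k \ge 3$, equivalently $n \ge 4d+3$, which forces $\lfloor (1+\sqrt{5+4d})/2\rfloor \ge 3$ as well.

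The core of the argument is to verify that the iterative construction of $\mathscr{P} = \mathscr{P}_{k-1} \cup \{\text{trivial intervals}\}$ described just before the theorem actually yields a valid partition with the required property. I would break this into three checks. First, \emph{admissibility of the densities}: one must confirm that for $n$ in the stated range and $0 \le l < k$, the choices $s = k$ (when $l=0$) and $s = k-1$ (when $0 < l < k$) satisfy the hypotheses needed to invoke Lemma~$3.1$ and Proposition~$3.3$ — concretely that $s+1 \le \frac{n+1}{d+l+1}$, which is exactly where the bound $k \le \lfloor (1+\sqrt{5+4d})/2\rfloor$ enters: expanding $k(k-1) \le d+1$ (coming from $k \le \frac{n+1-(d+l+1)}{d+l+1}$ at its tightest, $l=k-1$, with $r=0$) gives the quadratic inequality whose solution is $k \le \lfloor (1+\sqrt{5+4d})/2\rfloor$. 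Second, \emph{disjointness across levels}: within a fixed level $l$, Proposition~$3.3$ gives that the intervals of $\mathscr{I}_{n,d+l,s+1}$ are pairwise disjoint; across levels one must show that an interval $[A, f_{s+1}(\widetilde A)\cap[n]]$ newly added at level $l$ (so its left endpoint $A$, of size $d+l$, is uncovered by $\mathscr{P}_{l-1}$) does not meet any interval already placed — and here the ``moreover'' clause of Proposition~$3.3$, saying no superset of an uncovered set becomes covered, is exactly what rules out the right endpoint of the new interval intersecting an earlier interval, by an induction on $l$. Third, \emph{cardinality of right endpoints}: by Proposition~$3.3$, $|f_{s+1}(\widetilde A)\cap[n]| = (d+l) + s$, which equals $d+k$ when $l=0, s=k$ and equals $d+l+k-1 \ge d+k$ when $0 < l < k$; and every subset of $P_{I_{n,d}}$ left uncovered after level $k-1$ has cardinality $\ge d+k$ (otherwise it would be a $(d+l)$-set for some $l < k$, and by the ``moreover'' clause it would be forced to be covered, or it would itself be a left endpoint that got added), so the trivial intervals on those sets also have right endpoints of size $\ge d+k$.

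Assembling these, every interval in $\mathscr{P}$ has right endpoint of size $\ge d+k$, so $\sdepth(\mathscr{D}(\mathscr{P})) \ge d+k$, giving $\sdepth(I_{n,d}) \ge d+k = \lfloor \binom{n}{d+1}/\binom{n}{d}\rfloor + d$; combined with the known upper bound this is equality. The main obstacle I anticipate is the cross-level disjointness and completeness bookkeeping: one has to argue carefully that when an interval from $\mathscr{I}_{n,d+l,k}$ is \emph{discarded} (because its left endpoint was already covered) nothing is lost, and when it is \emph{kept} it is genuinely disjoint from everything placed at levels $0, \dots, l-1$ as well as from the other kept intervals at level $l$ — and crucially that the leftover sets are coverable by trivial intervals of the right size. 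The ``moreover'' parts of Proposition~$3.3$ are tailored precisely for this, but organizing the induction cleanly (level by level, and within a level over the chosen intervals) and checking that no $(d+l)$-subset with $l<k$ escapes both the covering and the adding is the delicate part; the density/quadratic estimate, while it looks like the crux, is in the end a routine inequality once set up.
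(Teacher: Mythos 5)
Your outline matches the paper's construction, and your density computation (the tightest constraint $k(k-1)\le d+1$ arising at level $l=k-1$, $r=0$, giving $k\le\left\lfloor(1+\sqrt{5+4d})/2\right\rfloor$) is correct. But there is a real gap in your cross-level disjointness argument. You rely solely on the ``moreover'' clause of Proposition~3.3 to show that a newly added interval $[D,f_k(\widetilde D)\cap[n]]$ (with $D$ a $(d+l)$-set uncovered by $\mathscr{P}_{l-1}$) is disjoint from every interval already placed. That clause is a statement about an \emph{entire} collection $\mathscr{I}_{n,d+q,\cdot}$: its hypothesis requires that $D$ be uncovered by \emph{any} interval in that collection, including the discarded ones. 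For $q=0$ this is fine, since $\mathscr{P}_0=\mathscr{I}_{n,d,k+1}$ with nothing discarded. But for $1\le q<l$, the kept intervals $\mathscr{P}_q\setminus\mathscr{P}_{q-1}$ form a proper subcollection of $\mathscr{I}_{n,d+q,k}$, so ``$D$ uncovered by $\mathscr{P}_{l-1}$'' does not entail ``$D$ uncovered by $\mathscr{I}_{n,d+q,k}$''; a discarded interval of level $q$ may well cover $D$, in which case the hypothesis of the moreover clause fails and you learn nothing about supersets of $D$.

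The paper closes this gap with Proposition~3.5, a \emph{pairwise} disjointness result: if $|C|=d+q\le d+l=|D|$, the densities satisfy $\eta\le\delta$ and $(d+l+1)\eta\ge(d+q+1)\delta$, and $D$ is not covered by the single interval $[C,f_\delta(\widetilde C)\cap[n]]$, then $[C,f_\delta(\widetilde C)\cap[n]]$ and $[D,f_\eta(\widetilde D)\cap[n]]$ are disjoint. This is exactly what is needed here: $D$ uncovered by $\mathscr{P}_{l-1}$ means in particular $D$ is not covered by any specific $[C,f_k(\widetilde C)\cap[n]]\in\mathscr{P}_{l-1}\setminus\mathscr{P}_0$, and with $\eta=\delta=k$ the density inequality reduces to $l\ge q$. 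Proposition~3.5 is proved by a separate block-structure argument (the extended block structure on $[m'']$, generalizing Lemma~3.1 of $[13]$) and is not a formal consequence of Proposition~3.3, so your proposal is missing a necessary tool. The remaining pieces of your plan --- within-level disjointness via Proposition~3.3, coverage of every $(d+l)$-set with $l<k$, and right-endpoint cardinalities $\ge d+k$ --- are sound and agree with the paper.
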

\begin{proof}
It is obvious that $P_{I_{n,d}}$ is covered by $\mathscr{P}$. The hard part is to show that the intervals in $\mathscr{P}$ are disjoint.

By Proposition $3.3$, the intervals in $\mathscr{P}_0$ are disjoint. Again by Proposition $3.3$, the intervals in $\mathscr{P}_1$ are disjoint. Suppose the intervals in $\mathscr{P}_{l-1}$ are disjoint. If $l=k$, we are done. Otherwise, it is sufficient to prove that the intervals in $\mathscr{P}_{l} \setminus \mathscr{P}_{l-1}$ are pairwise disjoint, and moreover, any interval in $\mathscr{P}_{l} \setminus \mathscr{P}_{l-1}$ is disjoint with any one in $\mathscr{P}_{l-1}$. Because all the intervals in $\mathscr{P}_{l} \setminus \mathscr{P}_{l-1}$ are from $\mathscr{I}_{n,d+l,k}$, they are pairwise disjoint by Proposition $3.3$. So we only need to show that any interval in $\mathscr{P}_{l} \setminus \mathscr{P}_{l-1}$ cannot have a nontrivial intersection with any one in $\mathscr{P}_{l-1}$. 

Suppose that the left endpoint $D$ of an interval $[D,f_{k}(\widetilde{D}) \cap [n]]$ in $\mathscr{I}_{n,d+l,k}$ is not covered by any element in $\mathscr{P}_{l-1}$. By Proposition $3.3$, it does not intersect any interval in $\mathscr{I}_{n,d,k+1}$. For any remaining interval $[C,f_{k}(\widetilde{C}) \cap [n]]$ in $\mathscr{P}_{l-1}$, Proposition $3.5$ guarantees that $[C,f_{k}(\widetilde{C}) \cap [n]]$ and $[D,f_{k}(\widetilde{D}) \cap [n]]$ do not intersect. Therefore any interval in $\mathscr{P}_{l} \setminus \mathscr{P}_{l-1}$ cannot have a nontrivial intersection with any one in $\mathscr{P}_{l-1}$. This proves that the intervals in $\mathscr{P}$ are disjoint.  

The right endpoint of any interval in $\mathscr{P}$ has cardinality at least $d+k$, so $\sdepth (I_{n,d}) \ge d+k$. Combining this inequality with Lemma $2.2$ in $[13]$ or Theorem $1.1 (b)$ in $[7]$, we have $\sdepth(I_{n,d})=d+k$.
\end{proof}

Combining the results for $k=0, 1, 2$ and Theorem $4.1$, we complete the proof of Theorem $1.2 (1)$.

For any $n > (d+1) \left \lfloor \frac{1+\sqrt{5+4d}}{2}\right\rfloor+2d$, $n$ can be written uniquely as $n=(d+1)k+d+r$ with $0 \le r \le d$ and $k \ge \left \lfloor \frac{1+\sqrt{5+4d}}{2}\right\rfloor+1 \ge 3$. 
Let $s=\left \lfloor \frac{-(d+2)+\sqrt{d^2+4(n+1)}}{2} \right\rfloor$; it satisfies the inequality $s+1 \le \frac{n+1}{d+s+1}$. Thus for any integer $q$ with $1 \le q \le s$, we have $s+1 \le \left \lfloor \frac{n+1}{d+q+1}\right\rfloor$, and $(s+1)$ can be used as a common density for constructing $\mathscr{I}_{n,d+q,s+1}$.
An interval partition $\mathscr{P}$ of $P_{I_{n,d}}$ can be construct as follows.

First we include all the intervals in $\mathscr{I}_{n,d,k+1}$ into $\mathscr{P}$ and denote the set of these intervals $\mathscr{P}_0$. Then we proceed to $\mathscr{I}_{n,d+1,s+1}$ to construct $\mathscr{P}_1$. If the left endpoint of an interval in $\mathscr{I}_{n,d+1,k}$ is not covered by any element in $\mathscr{P}_0$, then add it into $\mathscr{P}_0$; otherwise discard it. The resulting set will be denoted as $\mathscr{P}_1$. Continue this process. An interval in $\mathscr{I}_{n,d+q,s+1}$ will be added into $\mathscr{P}_{q-1}$ if its left endpoint is not covered by any element in $\mathscr{P}_{q-1}$. After this selection process reaches $s=\left \lfloor \frac{-(d+2)+\sqrt{d^2+4(n+1)}}{2} \right\rfloor$, adding the remaining uncovered subsets of $P_{I_{n,d}}$ as trivial intervals into $\mathscr{P}_{s}$ gives $\mathscr{P}$. The disjointness of the intervals in $\mathscr{P}$ guarantees by Proposition $3.3$ and Proposition $3.5$. Repeating the same argument as that in the proof of Theorem $3.6$, we can show that $\mathscr{P}$ is an interval partition of $P_{I_{n,d}}$. Now $d+1+s=d+1+\left \lfloor \frac{-(d+2)+\sqrt{d^2+4(n+1)}}{2} \right\rfloor=\left \lfloor \frac{d+\sqrt{d^2+4(n+1)}}{2} \right\rfloor \le \sdepth(I_{n,d})$ follows. The upper bound has been obtained in $[7]$ and $[13]$. This completes the proof of Theorem $1.2(2)$.

\section{An alternative proof of Theorem $1.1$ in $[13]$}

Our construction leads to a direct proof of Theorem $1.1$ in $[13]$, without using graph theory. In order to prove the first part of this theorem, we need the following lemma.

\begin{lem} Fixed a nonnegative integer $k$, and suppose that $\sdepth(I_{(d+1)k+d,d})=d+k$ for any positive integer $d$. Then for any positive integers $n$ and $d$, such that $n \ge (d+1)k+d$, we have $\sdepth(I_{n,d}) \ge d+k$, in particular, $\sdepth(I_{n,d})=d+k$ for $(d+1)k+d \le n \le (d+1)k+2d$.
\end{lem}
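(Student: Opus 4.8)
The plan is to prove the lemma by a one‑variable induction that reduces every admissible pair $(n,d)$ to a base case of the form $n=(d+1)k+d$, where the hypothesis applies verbatim. The engine is the splitting inequality
$$\sdepth(I_{n,d})\ \ge\ \min\bigl\{\,\sdepth(I_{n-1,d}),\ \sdepth(I_{n-1,d-1})+1\,\bigr\}\qquad(n>d\ge 1),$$
with the convention $\sdepth(I_{m,0})=\sdepth\bigl(K[x_1,\dots,x_m]\bigr)=m$. Once this inequality is available the lemma becomes pure bookkeeping, so the genuine content of Theorem $1.1$ of $[13]$ lives in its hypothesis rather than in Lemma $5.2$.

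To establish the inequality, split $P_{I_{n,d}}=\{F\subseteq[n]:|F|\ge d\}$ according to whether $n\in F$. The sets omitting $n$ form a subposet that may be identified with $P_{I_{n-1,d}}$, while the sets containing $n$ form, via $F\mapsto F\setminus\{n\}$, a subposet order‑isomorphic to $P_{I_{n-1,d-1}}$. Given an interval partition $\{[C_i,D_i]\}$ of $P_{I_{n-1,d}}$ and an interval partition $\{[A_j,B_j]\}$ of $P_{I_{n-1,d-1}}$, the first partition together with the shifted intervals $\{[A_j\cup\{n\},B_j\cup\{n\}]\}$ is an interval partition of $P_{I_{n,d}}$: the two families are disjoint, one consisting of sets omitting $n$ and the other of sets containing $n$, and each family partitions its respective half. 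Its right endpoints have cardinalities $|D_i|$ and $|B_j|+1$, so choosing both partitions optimal and invoking the interval‑partition description of $\sdepth$ recalled in Section $2$ yields the displayed inequality.

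Now fix $k$ and induct on $n$, proving $\sdepth(I_{n,d})\ge d+k$ whenever $d\ge 1$ and $(d+1)k+d\le n$. If $n=(d+1)k+d$ this is exactly the hypothesis. If $n>(d+1)k+d$, then $(d+1)k+d\le n-1$ and $dk+d-1\le n-1$ (the latter because $k\ge 0$), so both $d$ and $d-1$ are admissible exponents for $n-1$; by induction $\sdepth(I_{n-1,d})\ge d+k$ and $\sdepth(I_{n-1,d-1})\ge d-1+k$, whence the splitting inequality gives $\sdepth(I_{n,d})\ge\min\{d+k,(d-1+k)+1\}=d+k$. When $d=1$ the second term degenerates and one uses $\sdepth(I_{n-1,0})+1=n\ge k+1$ instead, which holds since $n\ge 2k+1$.

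For the ``in particular'' clause, when $(d+1)k+d\le n\le (d+1)k+2d$ one has $k\le (n-d)/(d+1)<k+1$, so $\bigl\lfloor\binom{n}{d+1}/\binom{n}{d}\bigr\rfloor+d=\lfloor (n-d)/(d+1)\rfloor+d=d+k$; combining the lower bound just proved with the upper bound $\sdepth(I_{n,d})\le\bigl\lfloor\binom{n}{d+1}/\binom{n}{d}\bigr\rfloor+d$ from $[7]$ and $[13]$ gives equality. The step requiring the most care is the verification in the second paragraph that the combined family is a genuine \emph{partition} of $P_{I_{n,d}}$ (not merely a cover) with exactly the stated right‑endpoint sizes, together with keeping the two‑parameter induction consistent at the degenerate boundary $d=1$, where $I_{n-1,d-1}$ must be read as the full polynomial ring $K[x_1,\dots,x_{n-1}]$.
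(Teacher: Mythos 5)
Your proof is correct and, at its core, rests on the same machinery the paper uses: a ``splitting'' inequality relating $\sdepth(I_{n,d})$ to $\sdepth(I_{n-1,d})$ and $\sdepth(I_{n-1,d-1})$, the hypothesis applied at the seed $n=(d+1)k+d$, and the known upper bound $\lfloor\binom{n}{d+1}/\binom{n}{d}\rfloor+d$ for the ``in particular'' clause. Where you differ from the paper is largely in packaging, but the differences are genuine simplifications. First, the paper does not state the splitting inequality; it cites ``Lemma 2.5 in [13]'' and uses it as a black box. You instead give the short poset-theoretic proof: decompose $P_{I_{n,d}}$ by whether $n\in F$, identify the two halves with $P_{I_{n-1,d}}$ and (via $F\mapsto F\setminus\{n\}$) with $P_{I_{n-1,d-1}}$, and lift two optimal partitions to a partition of the whole, with right endpoints of sizes $|D_i|$ and $|B_j|+1$. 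This makes the argument self-contained. Second, the paper runs a double induction (outer on $d$, inner on $n$), and to seed the outer induction at $d=1$ it invokes Biro et al.'s theorem $\sdepth(I_{n,1})\ge\lceil n/2\rceil$; you collapse this to a single induction on $n$ with $d$ ranging over all admissible values, and you dispose of the $d=1$ boundary with the convention $\sdepth(I_{m,0})=m$ rather than citing [4]. That removes a dependence on an external result and slightly shortens the bookkeeping. The one thing worth flagging is that your version does lean on the fact that the interval-partition characterization of $\sdepth$ applies to $I=S$ itself (so that $P_{I_{n-1,0}}=2^{[n-1]}$ and $\sdepth(I_{n-1,0})=n-1$); you acknowledge this at the end, and it is indeed true, but it is precisely the corner the paper sidesteps by reaching for Biro et al. instead.
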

\begin{proof}
We use double induction, first on $d$ and then on $n$.

When $d=1$, Biro et al. $[4]$ proved that $\sdepth(I_{n,d}) \ge \left \lceil \frac{n}{2} \right\rceil$. So if $n \ge 2k+1$, we have 

$$\sdepth(I_{n,d}) \ge \left \lceil \frac{n}{2} \right\rceil \ge \left \lceil \frac{2k+1}{2} \right\rceil=1+k=d+k.$$

For the induction step, suppose for all $1 \le d' <d$, we have $\sdepth(I_{n,d'}) \ge d'+k$ for any $n \ge (d'+1)k+d'$. Now consider $\sdepth(I_{n,d})$. For this, we use induction on $n$.

The smallest value for $n$ is $(d+1)k+d$. In this case $\sdepth(I_{n,d}) = d+k$ by the assumption in Lemma $5.1$. For any $n > (d+1)k+d$, suppose $\sdepth(I_{n-1,d}) \ge d+k$ for $n-1 \ge (d+1)k+d$. By induction assumption on $d$, for $n-1 \ge (d+1)k+d> (d-1+1)k+d-1$, we have that $\sdepth(I_{n-1,d-1}) \ge d-1+k$. Therefore by Lemma $2.5$ in $[13]$, we have $\sdepth(I_{n,d}) \ge d+k$ for any $n > (d+1)k+d$.

By Lemma $2.2$ in $[13]$ or Theorem $1.1 (b)$ in $[7]$, $\sdepth(I_{n,d}) \le \left \lfloor \binom{n}{d+1}/\binom{n}{d} \right\rfloor+d=\left \lfloor \frac{n-d}{d+1} \right\rfloor+d=k+d$ when $(d+1)k+d \le n \le (d+1)k+2d$. Combining this with $\sdepth(I_{n,d}) \ge d+k$ for any $n \ge (d+1)k+d$, we immediately have $\sdepth(I_{n,d}) = d+k$ for $(d+1)k+d \le n \le (d+1)k+2d$.
\end{proof}

The notation $c$ used in $[13]$ is equal to $k+1$ here. So $c=1,2,3,4$ correspond to $k=0, 1, 2, 3$ respectively.

For $k=0, 1, 2$, we have proved in Section $4$ that $\sdepth(I_{(d+1)k+d,d})=d+k$ for any positive integer $d$. For $k=3$, Theorem $4.1$ is not sufficient; it misses the four cases when $d=1,2,3,4$. Instead, we prove $\sdepth(I_{(d+1)3+d,d})=d+3$ for any $d$ more directly.

By Lemma $3.3$ in $[13]$, each $(d+1)$-subset in $P_{I_{n,d}}$ is covered by $\mathscr{I}_{n,d,k+1}=\mathscr{I}_{n,d,4}$. For the $(d+2)$-subsets, we can construct $\mathscr{I}_{n,d+l,k-l+1}=\mathscr{I}_{n,d+2,2}$. An interval partition $\mathscr{P}$ of $P_{I_{(d+1)3+d, d}}$ can be constructed as follows. First we include all the intervals in $\mathscr{I}_{n,d,4}$ into $\mathscr{P}$. If the left endpoint of an interval in $\mathscr{I}_{n,d+2,2}$ is not covered by any element in $\mathscr{I}_{n,d,4}$, add this interval into $\mathscr{P}$; otherwise discard it. Adding the remaining uncovered subsets of $P_{I_{(d+1)3+d, d}}$ as trivial intervals into $\mathscr{P}$. It is easy to show that $\mathscr{P}$ gives an interval partition of $P_{I_{n,d}}$ by Proposition $3.3$. The right endpoint of any interval in $\mathscr{P}$ has cardinality at least $d+3$, so $\sdepth(I_{(d+1)3+d,d}) \ge d+3$. Combining this with the upper bound $\sdepth(I_{(d+1)3+d,d}) \le \left \lfloor \binom{(d+1)3+d}{d+1}/\binom{(d+1)3+d}{d} \right\rfloor+d=\left \lfloor \frac{(d+1)3+d-d}{d+1} \right\rfloor+d=3+d$ gives $\sdepth(I_{(d+1)3+d,d})=d+3$. This finishes the proof of Theorem $1.1(1)$ by Lemma $5.1$.

The left inequality in Theorem $1.1 (2)$ follows from Lemma $5.1$ because the Stanley depth $\sdepth(I_{(d+1)3+d,d})=d+3$ for any positive integer $d$. The right inequality in Theorem $1.1(2)$ has been proved in $[13]$ and $[7]$. Combining these two inequalities gives Theorem $1.1(2)$.

\bigskip
\begin{center}  
{\bf References}
\end{center}
\medskip

\begin{enumerate}[itemsep=0pt, parsep=0pt]

\item I. Anwar and D. Popescu, Stanley Conjecture in small embedding dimension, J. Algebra ${\bf 318}$, $1027$-$1031$, $2007$.

\item J. Apel, On a conjecture of R.P. Stanley. I. Monomial ideals, J. Algebraic Combin. ${\bf 17}$ $(1)$, $39$-$56$, $2003$.

\item J. Apel, On a conjecture of R.P. Stanley. II. Quotients modulo monomial ideals, J. Algebraic Combin. ${\bf 17}$ $(1)$, $57$-$74$, $2003$.

\item C. Bir\'o, D. Howard, M. Keller, W. Trotter and S. Young, Interval partition and Stanley depth. To appear in J. Combin. Theory Ser. A. $doi:10.1016/j.jcta.2009.07.008,$

\noindent $2009$.

\item M. Cimpoea\c{s}, Stanley depth of complete intersection monomial ideals. Bull. Math. Soc. Sci. Math. Roumanie (N.S.) ${\bf 51}$ $(99)$, no. $3$, $205$-$211$, $2008$.

\item M. Cimpoea\c{s}, A note on Stanley's conjecture for monomial ideals. $arXiv:0906.1303$

\noindent $[math.AC], 2009$.

\item M. Cimpoea\c{s}, Stanley depth of square free Veronese ideals. $arXiv:0907.1232$

\noindent $[math.AC], 2009$.

\item A. Dress, A new algebraic criterion for shellability. Beitrage Algebra Geom. ${\bf 34}$ $(1)$, 

\noindent $45$-$55$, $1993$.

\item J. Herzog and T. Hibi, Cohen-Macaulay polymatroidal ideals. European J. Combin. ${\bf 27}$, $513$-$517$, $2006$.

\item J. Herzog, A. Jahan and S. Yassemi, Stanley decompositions and partitionable simplicial complexes. J.
Algebraic Combin. ${\bf 27}$, $113$-$125$, $2008$.

\item J. Herzog, M. Vladoiu and X. Zheng, How to compute the Stanley depth of a monomial ideal.  J. Algebra ${\bf 322}$ $(9)$, $3151$-$3169$, $2009$.

\item A. Jahan, Prime filtrations of monomial ideals and polarizations, J. Algebra ${\bf 312}$ $(2)$,$1011$-$1032$, $2007$.

\item M. Keller, Y. Shen, N. Streib and S. Young, On the Stanley depth of squarefree Veronese ideals, $arXiv:0910.4645v1 [math.AC], 2009$.

\item M. Keller and S. Young, Stanley depth of squarefree monomial ideals. J. Algebra ${\bf 322}$ $(10)$, $3789$-$3792$, $2009$.

\item S. Nasir, Stanley decompositions and localization. Bull. Math. Soc. Sci. Math. Roumanie (N.S.) ${\bf 51}$ $(99)$ no. $2$, $151$-$158$, $2008$.

\item R. Okazaki, A lower bound of Stanley depth of monomial ideals. To appear in J. Commut. Algebra, $2009$.

\item D. Popescu, Stanley depth of multigraded modules. J. Algebra ${\bf 321}$ $(10)$, $2782$-$2797$,

\noindent $2009$.

\item A. Rauf, Stanley decompositions, pretty clean filtrations and reductions modulo regular elements, Bull. Soc. Sc. Math. Roumanie ${\bf 50}$ $(98)$, no. $4$, $347$-$354$, $2007$.

\item Y. Shen, Stanley depth of complete intersection monomial ideals and upper-discrete partitions. J. Algebra
${\bf 321}$ $(4)$, $1285$-$1292$, $2009$.

\item R. Stanley, Linear Diophantine equations and local cohomology. Invent. Math. ${\bf 68}$, $175$-$193$, $1982$.

\end{enumerate}

{\footnotesize DEPARTMENT OF MATHEMATICS, ANHUI UNIVERSITY, HEFEI, ANHUI, 230039, CHINA}

$\textit{E-mail address:}$ ge1968@126.com

\medskip

{\footnotesize DEPARTMENT OF MATHEMATICS, SUNY CANTON, 34 CORNELL DRIVE, CANTON, 

\hskip .05 cm NY 13617, USA}

$\textit{E-mail address:}$ linj@canton.edu

\medskip

{\footnotesize DEPARTMENT OF MATHEMATICS, UNIVERSITY OF SCIENCE AND TECHNOLOGY OF   

\hskip .05 cm CHINA,HEFEI, ANHUI, 230026, CHINA}

$\textit{E-mail address:}$ yhshen@ustc.edu.cn

\end{document}